\documentclass[11pt]{article}

\usepackage{amsmath}
\usepackage{parskip}
\usepackage{amssymb}
\usepackage{amsthm}
\usepackage{latexsym}
\usepackage{color}
\usepackage{mathtools}
\usepackage{graphicx}
\usepackage{appendix}
\usepackage{color} 
\usepackage{enumerate}
\usepackage{diagbox}
\usepackage[T1]{fontenc}
\usepackage{palatino}
\usepackage[english]{babel}
\usepackage[table]{xcolor}
\usepackage{calrsfs}
\usepackage{geometry}
\geometry{hmargin=4cm,vmargin=4cm}
\usepackage{url}
\DeclareSymbolFont{calletters}{OMS}{cmsy}{m}{n}
\DeclareSymbolFontAlphabet{\mathcal}{calletters}
\DeclareMathAlphabet{\mathpzc}{OT1}{pzc}{m}{it}

%
%

\def\b*{\begin{eqnarray*}}
\def\e*{\end{eqnarray*}}

%
%
\newtheorem{theorem}{Theorem}[section]
\newtheorem{definition}[theorem]{Definition}
\newtheorem{proposition}[theorem]{Proposition}
\newtheorem{lemma}[theorem]{Lemma}
\newtheorem{corollary}[theorem]{Corollary}

\newtheorem{assumption}[theorem]{Assumption}

\newtheorem{remark}[theorem]{Remark}

%
%


\newcommand{\No}[1]{\left\|#1\right\|}     





\def \R{\mathbb{R}}

\def \G{\mathbb{G}}


\def\Ac{{\cal A}}

\def\Cc{{\cal C}}

\def\Ec{{\cal E}}

\def\Hc{{\cal H}}

\def\Pc{{\cal P}}

\def\dbE{\mathbb{E}}
\def\dbF{\mathbb{F}}

\def\dbH{\mathbb{H}}

\def\dbP{\mathbb{P}}
\def\dbR{\mathbb{R}}

\def\a{\alpha}
\def\b{\beta}
\def\g{\gamma}

\def\e{\varepsilon}

\def\f{\varphi}

\def\G{\Gamma}

\def\Si{\Sigma}
\def\O{\Omega}

\def\pa{\partial}

\def\ms{\medskip}

\def\cd{\cdot}

\newcommand{\ba}{\begin{array}} 
\newcommand{\ea}{\end{array}}
\newcommand{\be}{\begin{equation}}
\newcommand{\ee}{\end{equation}}
\newcommand{\bea}{\begin{eqnarray}}
\newcommand{\eea}{\end{eqnarray}}
\newcommand{\beaa}{\begin{eqnarray*}}
\newcommand{\eeaa}{\end{eqnarray*}}

\def\q{\quad}



 \title{Principal-Agent Problem with Common Agency without Communication\footnote{This work is supported by the ANR project Pacman, ANR-16-CE05-0027}}

 \author{Thibaut Mastrolia\footnote{CMAP, \'Ecole Polytechnique, Route de Saclay, 91128, Palaiseau, \texttt{thibaut.mastrolia@polytechnique.edu}. This author gratefully acknowledges the Chair Financial Risks (Risk Foundation, sponsored by Soci\'et\'e G\'en\'erale) for financial supports.} \and  Zhenjie Ren\footnote{CEREMADE, Universit\'e Paris-Dauphine, PSL research university, Place du Mar\'echal de Lattre de Tassigny, 75016 Paris, \texttt{zhenjie.ren@ceremade.dauphine.fr}}}
 
             \date{\today}

 \begin{document}

 \maketitle

\begin{abstract}
In this paper, we consider a problem of contract theory in which several Principals hire a common Agent and we study the model in the continuous time setting. We show that optimal contracts should satisfy some equilibrium conditions and we reduce the optimisation problem of the Principals to a system of coupled Hamilton-Jacobi-Bellman (HJB) equations. We provide conditions ensuring that for risk-neutral Principals, the system of coupled HJB equations admits a solution. Further, we apply our study in a more specific linear-quadratic model where two interacting Principals hire one common Agent. In this continuous time model, we extend the result of \cite{Bernheim} in which the authors compare the optimal effort of the Agent in a non-cooperative Principals model and that in the aggregate model, by showing that these two optimisations coincide only in the first best case (see Proposition \ref{prop:bernheim}). We also study the sensibility of the optimal effort and the optimal remunerations with respect to appetence parameters and the correlation between the projects (see Proposition \ref{prop:faitstylized}). 

\vspace{5mm}

\noindent{\bf Key words:} Moral hazard models, common agency, system of HJB equations.
\vspace{5mm}

\noindent{\bf AMS 2000 subject classifications:} 93E20, 91A06, 91A15, 91B16, 91B70, 60H30.

\vspace{0.5em}
\noindent{\bf JEL subject classifications:} C73, D21, D82, D86, J41.
\end{abstract}

\tableofcontents

\section{Introduction}
In 2016, the Intergovernmental Science-Policy Platform on Biodiversity and Ecosystem Services (IPBES for short) deeply investigated\footnote{see the report \cite{potts2016summary} approved by the Plenary of IPBES at its fourth session in Kuala Lumpur.} pollinators and pollinisation processes in food production issues. To illustrate the motivation of our study in the light of the report of the IPBES, consider for instance one field divided for planting various crops each of them managed by an owner (\textit{she}). It interests the owners to collaborate with a beekeeper (\textit{he}) to ensure a better yield. In this case, the beekeeper distributes his beehives in the fields for the different crops, and manage to improve, through the pollination, the production of the crop-owners. Therefore, each crop-owner has to provide the beekeeper sufficiently good incentives to take care of her crop. The situation in this example typically illustrates a common agency problem with information asymmetry, that is, the crop-owners need to design contracts to motivate the beekeeper, each hoping of improving her own production, without observing directly how the beekeeper and his bees impact the crops. The crop-owners play thus the role of the Principals and the beekeeper can be seen as an Agent, in the so-called multi-Principals/Agent problem and more generally in the theory of incentives.\vspace{0.5em}  

\noindent The theory of incentives or contract theory appeared in the 70's with the work of Mirrlees \cite{mirrlees1976optimal} among others, by studying optimal payments schedule between two economical entities under imperfect observations on the performances of the employee who manages the employer's wealth. This has been investigated later in the textbook of Laffont and Tirole  \cite{laffont1993theory} with illustrations in regulation systems and the underlying game played between the two parties, and we refer to the survey book of Laffont and Martimort \cite{laffont2009theory} for a lot of relevant examples related to this theory. More precisely, the situation with moral hazard considered in contract theory can be described as follows. The Principal (\textit{she}) wants to hire another entity, namely the Agent (\textit{he}), to manage a project. Also, the Agent has a reservation utility which allows him to accept or reject the contract proposed by the Principal. The main difficulty encountered by the Principal is that she is potentially not well-informed toward the work of her Agent. Therefore, we need to distinguish two types of problems. First, we may assume that the Principal controls both the effort of the Agent and the optimal remuneration proposed to him. This situation fits with a risk-sharing model, called \textit{first-best}. In this case, we only investigate the problem of the Principal who chooses an optimal effort and an optimal remuneration under the reservation utility constrain of the Agent. In most of the situations, the Principal has no access to observe the effort of her agent, but only observe the outcome of his work. The problem of the Principal is thus to design an optimal contract maximizing her utility among all the contracts satisfying the Agent's reservation utility, without observing, not to say controlling, his effort. This more realistic situation is called moral hazard problem or \textit{second-best}. In practice, we identify this game between the Agent and the Principal with a Stackelberg equilibrium, \textit{i.e.} given any fixed contract, the Agent has the corresponding best reaction, and based on these predictable reactions the Principal choose the best contract in order to maximize her utility.\vspace{0.5em}

\noindent The researches made in the 70's related to contract theory mainly treat the problem in a discrete time framework with one or more periods, \textit{i.e.} the effort of the Agent, the outcome, and all other relevant quantity take values in a finite space. H\"olmstrom and Milgrom studied a continuous time model in the seminal paper  \cite{holmstrom1987aggregation} in which they consider a Brownian model for the wealth process, and assume the Principal does the optimization with the exponential utility. This work has then been extended by many other authors. Sch\"attler and Sung in \cite{schattler1993first} provided first-order sufficient conditions to solve the problem.  Sung in \cite{sung1995linearity} investigated this issue when the Agent can control the diffusion of the output. Then, Hellwig and Schmidt in \cite{hellwig2001discrete} made a link between the discrete time and continuous time models. All these papers used sophisticated tools of stochastic control theory such as dynamic programming and martingale approach. For more details, we refer to the survey paper of Sung \cite{sung2001lectures} or the book of Cvitanic and Zhang \cite{cvitanic2013contract}. \vspace{0.3em}

\noindent Recently, new ingredients have been introduced to the contract theory, which allows us to study more general models and obtain tractable solutions. Sannikov studied in \cite{Sannikov} a model in which the Principal provides continuous payments to the Agent and chooses a random retiring time. As a remarkable contribution of the paper, Sannikov observed that the value of  the Agent's  problem (the continuation utility) should be considered as a state variable for the problem of the Principal. Later, Cvitani\'c, Possama\"i and Touzi proposed in \cite{CPT1,CPT2} a very general procedure to solve Principal/Agent problems with lump-sum payments in which the Agent can control both the drift and the volatility of the output. They observe that as an utility maximization problem, the Agent's problem can be reduced\footnote{We refer to the works of El Karoui and Rouge \cite{rouge2000pricing} and Hu, Imkeller and M\"uller \cite{hu2005utility}.} to solving a backward stochastic differential equation (BSDE) introduced in Pardoux and Peng \cite{pardoux1990adapted} (see also El Karoui, Peng and Quenez \cite{el1997backward}), or its fully-nonlinear generalization, namely second-order BSDE (2BSDE) introduced in Soner, Touzi and Zhang \cite{soner2012wellposedness}. The problem of the Principal, given the best reaction of the Agent, is thus a standard stochastic controlled problem with the two state variables: the output process and the continuation utility of the Agent, as Sannikov suggested.

\noindent Extensions of the H\"olmstrom and Milgrom problem are nevertheless not restricted to the single Principal/single Agent model. It was extended to the single Principal/multi-Agents model by Koo, Shim and Sung in \cite{keun2008optimal} then by Elie and Possama\"i in \cite{EliePossamai}. In these works, the Agents aim at finding a Nash equilibrium given contracts proposed by the Principal. Elie and Possama\"i  proved that the Nash equilibrium among the Agents can be characterized as the solution to a multi-dimensional BSDE, and that the problem of the Principal can be reduced to solving a stochastic control problem which takes the continuation utilities of all the Agents as state variables. Elie and Possama\"i, in particular, studied a single Principal/2-Agents example in which the agents compete with each other. It is curious to see in the example that the less ambitious agent may make less effort and leave the job to the other agent. Recently, it was studied in \cite{mastrolia2017moral} that a Planner aims at acting for the welfare of the agents by finding Pareto optima, and the result is compared with that in \cite{EliePossamai} in which the agents reach Nash equilibria. In particular, the sufficient and necessary conditions are provided in \cite{mastrolia2017moral} so that the Pareto optima coincide with the Nash equilibria or that the Planner can  have a higher return than in the classical multi-agents second best case.\vspace{0.5em}

\noindent In this paper, we focus on a multi-Principals/single Agent problem in the continuous  setting. More precisely, we assume in this paper that several Principals aim at hiring one common agent who works simultaneously for all the Principals. This common agency problem echoes the example of the beekeeper and crop-owners presented above, and as far as we know, it has not been studied in the continuous  setting. Nevertheless, in the discrete case, there are several works having investigated this problem. In the 80's, Baron in \cite{Baron} studied a common agency problem involving regulators facing non-localized externalities. He illustrated his study with the example of the Environemental Protection Agency (EPA) and the public utility commission for the control of a non-localized pollution externality. Each entity has its own goal in preventing risks, so the conflicting interests can appear and a cooperation between the regulators does not hold. Baron thus implements cooperative and non-cooperative equilibrium in his particular regulation model. Another interesting application of common agency problem was shown in Braverman and Stiglitz \cite{BravermanStiglitz}, dealing with the sharecropping of the farmers and landlords. Later, Bernheim and Whinston (see \cite{BernheimWhinston, Bernheim}) proposed a general approach for analysing the problem with common agency. They provided a sufficient and necessary condition which characterizes the equilibrium, and proved that a non-cooperative equilibrium between the Principals is efficient (that is, it attains the classical second best level of effort and outcome) only for the first best level of effort. Dixit, Grossman and Helpman have then extended in \cite{dixit1997common} the works of Bernheim and Whinston to more general cases, by considering non-quasilinear utility functions and by characterizing equilibria for the general common agency problem in terms of Nash equilibria. They also discussed the efficiency in the Pareto sense. \vspace{0.5em}

\noindent As mentioned above, all the previous works studied the common agency problem in the discrete time model. As far as we know, this topic has not been explored yet with a diffusion model in the continuous time setting, while such setting-up apparently simplifies the technical argument and produces tractable solutions (as demonstrated in \cite{Sannikov,CPT1,CPT2}). Our work can be treated as an extension of the works of Bernheim and Whinston and that of Dixit, Grossman and Helpman in this direction, and we also try to characterize the Nash equilibria among the contracts. Our study remains part of the analysis of Nash equilibrium with asymmetry of information (see for instance  \cite{CetinDanilova}), by showing that the problems of each Principal to find such equilibrium is reduced to find a solution to a fully coupled HJB equation. This relation was well investigated in the books  \cite{Carmona, Dockner} and was also applied by Carmona and Yang to a predator trading with looser time constraints in \cite{CarmonaYang}. Here, we register our model in the continuation of all the mentioned works by assuming that each Principal is rational and aims at finding his best reaction remuneration facing to the actions of other Principals which naturally leads to the investigation of Nash equilibria.\vspace{0.5em}

\noindent The general structure of our paper is the following, in Section \ref{soussection:principalequilibrium:intro} we state the common agency model that we study. In particular, we introduce the definition of equilibrium. We then solve the problem of the common agent in Section \ref{section:agentpb} by using a verification result. At the heart of this work, we propose in Section \ref{sec:Principal prob}  a general procedure to solve the common agency problem of the Principals, by providing a characterization of the Nash equilibria in a particular set of contracts and by deriving the corresponding fully coupled system of HJB equations. We prove in particular that when the Principals are risk-neutral, we can find a solution to the corresponding system by considering the solution of the problem in which the Principals are aggregated into a unique Principal. As an application of our work, we study in Section \ref{section:2principaux} a model with two Principals hiring a common Agent by comparing a competitive model in which the Principals are not cooperative to a situation in which the offers of the principals are aggregated and the aggregated offer fits the classical bilateral Principal/Agent model. All along this study, we try to give economical interpretations of our mathematical model and results. 

\noindent Finally, by going beyond the example of beekeepers remunerated by crop-owners introduced at the beginning, this paper could be a good starting point to treat the applications such as government management in which different level of a same Minister compensated a firm, to recall the example in \cite{BernheimWhinston} or foreign borrowing issues as those exposed in the work of Tirole \cite{tirole2003inefficient}.

\paragraph{Notations.} Thorough this paper, $T>0$ is the maturity of the contract and is assumed to be fixed. Let $N$ be a positive integer. We denote by $|\cdot|$ the absolute value and $\| \cdot \|$ the norm in $\mathbb R^N$ to simplify the notations. We identify $\mathbb R^{N,N}$ with the space of real square matrices with size $N$, endowed with the Euclidean norm on $\R^{N,N}$. Let $M$ be in $\R^{N,N}$, $1\leq j\leq N$ and $1\leq \ell\leq N$, we denote by $M^{j,:}\in \R^{1\times N}$ (resp. $M^{:,\ell}\in \R^{N,1}$) its $j$-th row (resp. its $\ell$-th column). We set $M^\top\in \R^{N, N}$ the transpose of $M$. We also identify $\R^N$ with $\R^{1,N}$ and for any element $x$ of $\mathbb R^N$, we denote by $x^i$ its $i$th component and $x^{(-i)}$ an $\mathbb R^{N-1}$ dimensional vector such that $x^{(-i)}:= (x^1,\dots, x^{i-1}, x^{i+1},\dots, x^N)^\top$.

\noindent Let $G:\mathfrak b\in \mathbb R^N\longmapsto G(\mathfrak b)\in \mathbb R$, we denote by 
$\nabla_{\mathfrak b} G(\mathfrak b)\in \mathbb R^N$ the gradient of $G$. When $G$ is multivariate and when there is no ambiguity on the notations, $\nabla_{\mathfrak b} G$ denotes the gradient of $G$ with respect to the vector $\mathfrak b$. When $N=1$ we simply write $\partial_y v$ as the derivative of any function $v$ with respect to the real variable $y$. Assume that a real function $u$ is multivariate with $x\in \mathbb R^N$ and $y\in \mathbb R$, we denote by $\partial_{y,y}^2u$ the second derivative of $u$, $\nabla^2_x u$ the Jacobian matrix with size $N\times N$ of $u$ with respect to $x$ and $\partial^2_{x,y}u$ the $\mathbb R^{N,N}$-valued diagonal matrix of the derivatives of $u$ with respect to $y$ and $x$. We denote by $\mathcal C^{1,2,2}([0,T]\times \mathbb R^N\times \mathbb R)$ the space of functions continuously differentiable in time and twice continuously differentiable functions with respect to their space variables, and we simply the notation by writing $\mathcal C^{1,2,2}$ when there is no ambiguity on the underlying spaces.

\section{The multi-Principal/Agent model}\label{soussection:principalequilibrium:intro}
In this paper we consider the Principals-Agent problem in which $N$ Principals hire an Agent in order to manage $N$ risky projects. In this model, we assume that the projects are possibly correlated among each others and the action of the hired Agent impacts on every project. This section introduces the mathematical model of the problem. We first define the dynamics of the $N$ projects, and then set the utility maximization problems of both the Agent and the Principals as a Stackelberg equilibrium.

Let $\O:=C([0,T],\dbR^N)$ be the canonical space, and $X$ be the canonical process. In our model, the process $X$ describes the outputs of all $N$ projects. Let $\dbF$ be the canonical filtration, for any finite dimensional normed space $(E,\No{\cdot}_E)$, $ \mathcal P(E)$ (resp. $\mathcal P_r(E)$) will denote the set of $E-$valued, $\mathbb F-$adapted integrable processes (resp. $\mathbb F-$predictable processes).

\noindent Let $\Pc$ be the set of all probability measures on $\O$. In the paper, we consider the models on $X$ in the following form:
\bea\label{eq:output}
& (\nu, \dbP^\nu)\q\mbox{such that\q $\nu$ is $\dbF$-progressive, $\dbP^\nu\in \Pc$ and}&\notag\\
& X_t = x + \int_0^t b(s, X_s, \nu_s )ds +  \int_0^t \Sigma_s dW^{\dbP^\nu}_s,\q\dbP^\nu\mbox{-a.s.}&
\eea
where 
\begin{itemize}
\item $x\in \R^N$ is the initial value of the output;
\item  the drift function $ b: [0,T]\times \mathbb R^N\times \mathbb R^N\longrightarrow\R^N$ is of components denoted by $b^i$, i.e. $b(t,x,\nu)=(b^i(t,x,\nu^i))^\top_{1\leq i\leq N }$;
\item  $\Sigma$ is $\dbF$-progressively measurable process taking values in $\mathbb R^{N, N}$, characterizing the correlation of noise among the different projects;
\item $W^{\dbP^\nu}$ is a $\dbP^\nu$-Brownian motion.
\end{itemize}
The process $\nu$ is regarded as the effort of the Agent, and influences the output $X$ through the drift function $b$. More exactly, we set the following assumptions on the drift and the volatility of the output process.

\ms

\begin{assumption}\label{assum:si b}
$\Sigma$ is bounded and for any $t\in [0,T]$, the matrix $\Sigma_t$ is invertible with bounded inverse. The drift function $b(t,X_t,\nu)$ is $\dbF$-progressive for each $\nu \in \dbR^N$. For any $i\in \{ 1,\dots,N\}$ and  every $(t,x)\in [0,T]\times \mathbb R^N$, the map $\nu\in \mathbb R^N\mapsto b^i(t,x,\nu^i)$ is continuously differentiable and there exists a positive constant $C$ such that for any $(t,x,\nu)\in [0,T]\times \mathbb R^N\times \mathbb R^N$
$$ |b^i(t,x, \nu^i)|\leq C(1+\|x\|+|\nu^i|),\quad \|\nabla_\nu b(t,x, \nu)\|\leq C.$$
\end{assumption}

\noindent For technical reasons, we constrain our analysis on the following admissible effort:
\begin{definition}[admissible effort]
We denote by $\mathcal A$ the set of admissible efforts $(\nu,\dbP^\nu)$ such that
\begin{multline*}
\mbox{\eqref{eq:output} holds true, $\nu$ takes values in  $A\subset \dbR^n$ so that:}\\
 (M_t^\nu)_{t\in [0, T]}:= \left(\mathcal E\left(\int_0^t b(s,X_s, \nu_s)\cdot \Sigma^{-1}_s dW^{\dbP^\nu}_s \right) \right)_{t\in [0,T]}\mbox{ is an  $\mathbb F$-martingale,}
\end{multline*}
where $\Ec$ denotes the Dol\'eans-Dade exponential.
\end{definition}
Since $M^\nu$ is an $\dbF$-martingale for each admissible effort, $\{\dbP^\nu\}_\nu$ are all equivalent measures. More precisely, we have
\bea\label{P0}
\frac{d\mathbb P^\nu}{d\mathbb P}= M^\nu_T,\q\mbox{where}\q\dbP:=\dbP_0\circ \Big(x+\int_0^\cd \Si_sdX_s\Big)^{-1}
\eea
and $\dbP_0$ is the Wiener's measure.

In our model, the Agent derives a utility given the salaries from the $N$ Principals at the terminal time $T$, and his effort diminishes his general payoff through the cost function $c: [0,T]\times \mathbb R^N\times \mathbb R^N \longmapsto \mathbb R_+$. More precisely, each Principal, e.g. Principal $i$, proposes to the Agent a contract with the salary denoted by $\xi^i$, a $\mathbb R$-valued $\mathcal F_T$-measurable random variable. The total salary for the Agent is thus $\xi\cdot \mathbf 1_N$, where $\xi=(\xi^1,\dots,\xi^N)^\top$ and $\mathbf 1_N$ denotes the $N$-dimensional vector $(1,\dots,1)^\top$, living in some space $\mathcal C_N$ of admissible contracts defined below. We consider the exponential utility for the Agent with risk aversion parameter $R_A>0$, so that the utility of the Agent at time $t=0$ given a set of salary $\xi$ and an admissible effort $(\nu,\dbP^\nu)\in \Ac$ is
\begin{equation*}
u_0^A(\xi,\nu):= \mathbb E^{\mathbb P^\nu}\left[-\exp\left(-R_A\left( \xi\cdot \mathbf 1_N -\int_0^T c(t,X_t,\nu_t)dt\right)\right) \right].
\end{equation*}

\begin{assumption}\label{assum:c}
The cost function $c(t,X_t,\nu)$ is $\dbF$-progressive for each $\nu\in \dbR^N$. For any $(t,x)\in [0,T]\times \R^N$, the map $\nu\longmapsto c(t,x,\nu)$ is continuously differentiable, increasing and convex. There exist $0< \underline \kappa<\kappa$  and $(m,\underline m)\in [1,+\infty)\times (0,m]$ such that 
$$ 0\leq c(t,x,\nu)\leq C(1+\|x\|+\|\nu\|^{1+m}), $$
$$\underline\kappa  \|\nu\|^{\underline m} \leq \|\nabla_\nu c(t,x,\nu)\|\leq \kappa  \Bigg(1+\|\nu\|^{m}\Bigg) \, \quad \mbox{and} \quad \overline{\lim}_{\|\nu\|\to +\infty} \frac{c(t,x,\nu)}{\|\nu\|}=+\infty.$$
\end{assumption}

The cost function $c$ depends obviously on the effort of the Agent and the fact that $c$ is increasing in it emphasizes that making a bigger effort leads to a bigger cost for the Agent. The convexity assumption can be seen as an exhaustion effect, \textit{i.e.} higher is the work of the Agent, higher is his      sensitivity to increase or decrease his effort. The cost $c$ can also depends on the outcome itself. For instance, if the value of the outcome is low, the Agent could be somehow depressed and his action could be directly impacted by it.\vspace{0.3em}

The Agent tries to maximize his utility given a panel of salaries $\xi$, by choosing an admissible effort. Therefore, given a panel of contracts $\xi$ proposed by the $N$ Principals, the weak formulation of the {\bf Agent's problem} is 
\begin{equation}\label{pbAgent}
U_0^A(\xi):= \sup_{(\nu,\dbP^\nu)\in \mathcal A} u_0^A(\xi,\nu).
\end{equation}

Denote by $\mathcal A^\star(\xi)$ the set of best responses of the Agent, given a vector of salaries $\xi$. Given the best response, we will study Nash equilibriums among the $N$-Principals when they try to maximize their own utilities.

\noindent Consider an Agent with a reservation utility $R_0\in \mathbb R$, that is, the $N$-Principals have to solve their utility maximization problem under the constraint that the set of contracts $\xi$ proposed to the Agent satisfies
\begin{equation}
U_0^A(\xi)\geq R_0.
\end{equation} 
Otherwise, the Agent may refuse to work for them.
Now we introduce the problem of the $i$th Principal. Assume that her payoff is given by a function $U_{P^i}:\mathbb R\longrightarrow\mathbb R$ increasing and concave. The {\bf problem of Principal $i$} is  
 \begin{equation}\label{pbprincipali}
 U_0^{P_i}:= {\sup_{\xi^i\in \mathcal C^i: U_0^A(\xi)\geq R_0}} \sup_{\nu^\star\in \Ac^\star(\xi)} \mathbb E^{\nu^{\star}}\left[ \mathcal K_0^T U_{P_i}(\ell_i(X_T)-\xi^i)\right],
 \end{equation}
 where
 \begin{itemize}
 \item $\mathcal K_0^T:= e^{-\int_0^T k_r dr}$ is the discounting process with a bounded process $k$;
 \item  $\ell_i:\mathbb R^N\longrightarrow \mathbb R$ is a liquidation function of linear growth;
 \item $\Cc^i$ is the set of admissible contract which will be defined later (see \eqref{def:admissiblecontract}).
 \end{itemize}
 \ms

\begin{remark}\label{rem:Asingleton}
Our definition of the Principal's problem follows the one in \cite{CPT2}. As in the classic literature (see e.g. \cite{Sannikov, CPT1,CPT2}), we will use a semimartingale representation of the contract $\xi$ to transfer the Principal's problem to a standard stochastic control problem. After this transformation, we shall see that the maximization over $\nu^\star\in \Ac^\star(\xi)$ does not change the structure of the problem. Therefore, for the simplicity of notations, we assume in the rest of the paper that $\Ac^\star(\xi)$ is a singleton, and the problem of Principal $i$ can read as
\beaa
 U_0^{P_i}:= {\sup_{\xi^i\in \mathcal C: U_0^A(\xi)\geq R_0}}  \mathbb E^{\nu^{\star}(\xi)}\left[ \mathcal K_0^T U_{P_i}(\ell_i(X_T)-\xi^i)\right].
\eeaa
\end{remark}
 

 \noindent We can provide two interesting examples which have been investigated in the discrete case by \cite{BernheimWhinston}.
 
\paragraph{Competitive Principals.} {\it If one considers now that the $i$th Principal receives $X_T^i$ at time $T$ and possibly gets a higher utility if her project is higher than the empirical mean of the others, we have $$\ell_i(x)= x^i+ \gamma^i\left(x^i- \frac{1}{N-1}\sum_{i\neq j} x^j\right),\; x\in \mathbb R^N,$$ where $\gamma^i\geq 0$ is typically her appetence parameter toward project $i$. This situation fits exactly with \textit{noncooperative Principals' behaviors} investigated in \cite{BernheimWhinston}. We refer to Sections \ref{section:impactEffortAmbitions} for an example of the impact of the different parameters in this situation.}

\paragraph{Aggregated offer.} {\it If one considers now that one can aggregate the $N$-Principals, then we reduce this problem to a single Principal-Agent model in the classical second-best case in which each $i$th component of the output has an efficiency parameter $\gamma^i$ compared to the other components. In this case, the payoff of the aggregated Principal, called the \textit{parent firm} is
$$\sum_{i=1}^N X_T^i\left(1+\gamma^i-\frac{1}{N-1}\sum_{j\neq i}\gamma^j\right)-\xi$$ where $\xi$ is the aggregation of the $N$ salaries. This case coincides with the cooperative model of Section 2 in \cite{BernheimWhinston}, and it will be studied more deeply in Section \ref{section:casPrincipauxFullyCooperatif}.  }

\noindent We finish this section, by introducing the Nash equilibrium among the $N$ Principals.

\begin{definition}[Nash equilibrium]\label{def:nash}
A contract $\xi^\star \in \mathcal C_N$ is a Nash equilibrium for the $N$ Principals if it satisfies
 $$\sup_{\xi^i\in \mathcal C^i} \mathbb E^{\nu^{\star}(\xi^\star)}\left[ \mathcal K_0^T U_{P_i}\left(\ell_i(X_T)-\xi^i \right)\right]= \mathbb E^{\nu^{\star}(\xi^\star)}\left[\mathcal K_0^T U_{P_i}\left(\ell_i(X_T)-\xi^{i,\star}\right)\right],\; 1\leq i\leq N$$
 with $U_0^A(\xi^\star)\geq R_0. $
\end{definition}

\section{Solving the Agent problem}\label{section:agentpb}

The essential idea in \cite{Sannikov, CPT1,CPT2} is to study the contracts with a (forward) semimartingale representation. The representation, on the one hand, can help solve the Agent problem through simple verification argument. On the other hand, it reduces the Principal problem to standard stochastic control problem. In our study, we shall borrow this idea. However, instead of representing every contract $\xi^i$, we only use the semimartingale representation of the sum of the salaries $\xi\cd 1_N$.

First, let us recall the definition of BMO space.
 \begin{definition}\label{bmo}
 For any process $Z\in \mathbb H^2:=\{Z\in \mathcal P_r(\mathbb R^N): \dbE^\dbP[\int_0^T \|Z_t\|^2 dt]<\infty\}$ (recall $\dbP$ defined in \eqref{P0}), we say that $Z\in \dbH^2_{BMO}$, if there exists a non negative constant $C$ such that for any $t \leq T$ we have
 $$\mathbb E^\dbP_t\left[\int_t^T \|Z_s\|^2ds \right]\leq C^2.$$
  \end{definition}

In the rest of paper, we constrain our study on the following type of set of contracts:
\begin{multline}\label{def:CN}
 \mathcal C_N:=\Big\{ \xi:~ U_0^A(\xi)\geq R_0\q\mbox{and there is $Y_0\in \dbR^N$ and $Z\in \dbH^2_{BMO}$ such that}\q\\
 \xi\cdot\mathbf 1_N = Y_0 + \int_0^T  G(t,X_t, Z_t) dt +\int_0^T Z_t \cdot  dX_t,\q\dbP\mbox{-a.s.}
  \Big\},
\end{multline}
where $G(s,x,z):= \frac{R_A}2  \|  \Sigma_s^\top z\|^2- \sup_{\nu\in A}\big(b(t, x,\nu)\cdot z- c(t,x, \nu)\big)$. By recalling Remark \ref{rem:Asingleton}, we assume here that  $\arg\max_\nu\big(b(t, x,\nu)\cdot z- c(t,x, \nu)\big)$ is a singleton for each $(t,x,z)$ and denote
\bea\label{vstar}
\nu^\star_t(x,z):=\arg\max_\nu\big(b(t, x,\nu)\cdot z- c(t,x, \nu)\big).
\eea

\begin{lemma} Let Assumptions \ref{assum:si b} and \ref{assum:c} be true. Then
\beaa
\|\nu_t^\star(x,z)\| \leq C\left(1+\|z\|^{\frac{1}{\underline m}}\right)\q\mbox{and}\q
|G(t,x,z)|\leq C\left(1+  \|z\|^2+ \|z\|\|x\|\right). 
\eeaa
\end{lemma}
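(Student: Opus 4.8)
The plan is to establish the two inequalities separately, obtaining the control on $\nu^\star$ first and then inserting it into the estimate for $G$. For the bound on $\nu^\star$, I would first argue that the maximiser exists and is interior. By Assumptions \ref{assum:si b}--\ref{assum:c} the map $\nu\mapsto b(t,x,\nu)\cdot z-c(t,x,\nu)$ is $\mathcal C^1$; since $b$ has at most linear growth in $\nu$ while $\overline{\lim}_{\|\nu\|\to+\infty}c(t,x,\nu)/\|\nu\|=+\infty$, this map tends to $-\infty$ as $\|\nu\|\to+\infty$, so its supremum is attained, and being a singleton it is an interior critical point. Because each $b^i$ depends on $\nu$ only through $\nu^i$, the first-order condition reads, for $1\leq i\leq N$, $\partial_{\nu^i}c(t,x,\nu^\star)=\partial_{\nu^i}b^i(t,x,(\nu^\star)^i)\,z^i$. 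Taking Euclidean norms and using that $|\partial_{\nu^i}b^i|\leq\|\nabla_\nu b\|\leq C$ gives $\|\nabla_\nu c(t,x,\nu^\star)\|\leq C\|z\|$. Combining this with the lower bound $\underline\kappa\|\nu^\star\|^{\underline m}\leq\|\nabla_\nu c(t,x,\nu^\star)\|$ of Assumption \ref{assum:c} yields $\|\nu^\star\|^{\underline m}\leq (C/\underline\kappa)\|z\|$, hence the claimed $\|\nu^\star\|\leq C(1+\|z\|^{1/\underline m})$.

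For the estimate on $G$, I would split $G(t,x,z)=\frac{R_A}{2}\|\Sigma_t^\top z\|^2-H(t,x,z)$, where $H(t,x,z):=\sup_\nu(b(t,x,\nu)\cdot z-c(t,x,\nu))$. The quadratic term is bounded by $C\|z\|^2$ since $\Sigma$ is bounded, so it remains to control $|H|$. For the upper bound I would discard the nonnegative cost and use the linear growth of $b$, namely $H\leq b(t,x,\nu^\star)\cdot z\leq C(1+\|x\|)\|z\|+C\|\nu^\star\|\,\|z\|$, and then substitute the first estimate. For the lower bound I would evaluate the supremum at the reference effort $\nu=0$, obtaining $H\geq b(t,x,0)\cdot z-c(t,x,0)\geq -C(1+\|x\|)\|z\|-C(1+\|x\|)$, where the growths of $b(t,\cdot,0)$ and $c(t,\cdot,0)$ in $x$ are used. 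Collecting the two sides and using $\|z\|\leq 1+\|z\|^2$ then produces the announced bound $|G|\leq C(1+\|z\|^2+\|z\|\,\|x\|)$.

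The main difficulty lies in the cross term $C\|\nu^\star\|\,\|z\|$ appearing in the upper bound for $H$. By the first estimate it is of order $\|z\|^{1+1/\underline m}$, so to absorb it into the quadratic term one must exploit the growth exponents of the cost, either invoking $\underline m\geq 1$ or absorbing the term against the superlinear growth of $c$ through Young's inequality; this exponent bookkeeping is the only genuinely delicate point. A secondary point is that the bare $\|x\|$ contribution coming from $c(t,x,0)$ must be controlled, which is immediate once the cost at zero effort is bounded in $x$. Everything else — the boundedness of $\Sigma$, the purely linear entrance of $x$ through the growth of $b$, and the attainment of the maximum — is routine once this absorption is carried out.
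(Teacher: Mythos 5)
Your estimate for $\nu^\star$ is correct and is essentially the canonical argument; the paper itself gives no details for this lemma (its ``proof'' is a one-line deferral to the proofs in \cite{EliePossamai} and \cite{EMP}, which proceed exactly as you do: interior first-order condition, $\|\nabla_\nu c(t,x,\nu^\star)\|\le C\|z\|$ from the boundedness of $\nabla_\nu b$, then the coercivity $\underline\kappa\|\nu\|^{\underline m}\le\|\nabla_\nu c\|$). Your decomposition of $G$ into the quadratic term and the Fenchel-type supremum $H$ is also the right one.

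The gap is precisely the step you flag as ``the only genuinely delicate point'' and then leave undone, and one of the two escape routes you offer for it fails. The cross term $C\|\nu^\star\|\,\|z\|$ is of order $\|z\|^{1+1/\underline m}$, and it \emph{cannot} be absorbed against the superlinear growth of $c$ via Young's inequality when $\underline m<1$: the coercivity assumption only yields $c(t,x,\nu)\gtrsim\|\nu\|^{1+\underline m}$, and $\sup_{\nu}\big(C\|\nu\|\|z\|-\underline\kappa'\|\nu\|^{1+\underline m}\big)$ is itself of exact order $\|z\|^{1+1/\underline m}$. Concretely, with $N=1$, $b(t,x,\nu)=\nu$ and $c(t,x,\nu)=\nu^{1+\underline m}/(1+\underline m)$ on $\nu\ge0$ (which satisfies Assumption \ref{assum:c}), one computes $H(z)=\tfrac{\underline m}{1+\underline m}\,z^{1+1/\underline m}$, which dominates $z^2$ whenever $\underline m<1$. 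So the quadratic bound on $G$ genuinely requires $\underline m\ge1$, a condition Assumption \ref{assum:c} does not impose (it only asks $\underline m\in(0,m]$) and which the paper only invokes later as a sufficient condition for admissibility of $\nu^*$; you must either assume it explicitly or state the bound as $C(1+\|z\|^{1+1/\underline m}+\|z\|\,\|x\|)$. A second, smaller inaccuracy: you dismiss the $c(t,x,0)$ contribution as ``immediate once the cost at zero effort is bounded in $x$'', but Assumption \ref{assum:c} only gives $c(t,x,0)\le C(1+\|x\|)$, so a bare $+C(1+\|x\|)$ survives (e.g.\ $c=\|x\|+\|\nu\|^2$ gives $G(t,x,0)=\|x\|$, not covered by $C(1+\|z\|^2+\|z\|\|x\|)$ at $z=0$); this points to an imprecision in the lemma's statement itself, but it should be recorded rather than waved away.
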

\begin{proof}
The proof follows the same line that the proofs of \cite{EliePossamai} or \cite{EMP}.
\end{proof}

\begin{remark}
Thanks to the work on the backward SDE with quadratic growth, we know that the set $\Cc_N$ is quite large. In fact, provided that the  BSDE with terminal value $\xi$ and generator $G$ has a solution $(Y,Z)$ with $Z$ in the BMO space, and that  $U_0^A(\xi)\geq R_0$, then $\xi$ belongs to $\mathcal C_N$. For example, due to \cite{Kobylanski}, it is well-known that all bounded $\xi$ such that $U_0^A(\xi)\geq R_0$ belong to $\Cc_N$.\end{remark}
The following proposition reveals the optimal effort of the Agent.

\ms
\begin{proposition}\label{prop:agentpb}
Let Assumptions \ref{assum:si b} and \ref{assum:c} be true. For $\xi\in \Cc_N$, if $(\nu^*,\dbP^{\nu^*})\in \Ac$ where $\nu^*_t:=\nu^\star_t(X_t,Z_t)$, then  we have
\beaa
U^A_0(\xi) = -e^{-R_A Y_0}\q
\mbox{and \q $\nu^*_t$~ is the optimal effort of the Agent.}
\eeaa
\end{proposition}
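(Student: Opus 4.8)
The plan is to run a verification argument in the spirit of \cite{Sannikov, CPT1, CPT2, EliePossamai}, exploiting the semimartingale representation of $\xi\cdot\mathbf 1_N$ in \eqref{def:CN}. First I would introduce the continuation-utility process
\[
Y_t := Y_0 + \int_0^t G(s,X_s,Z_s)\,ds + \int_0^t Z_s\cdot dX_s,
\]
so that $Y_T = \xi\cdot\mathbf 1_N$ by the representation, and then define, for each admissible $(\nu,\dbP^\nu)\in\mathcal A$, the process
\[
V_t^\nu := -\exp\Big(-R_A\big(Y_t - \textstyle\int_0^t c(s,X_s,\nu_s)\,ds\big)\Big).
\]
By construction $V_0^\nu = -e^{-R_A Y_0}$ and $V_T^\nu$ is exactly the random variable inside the expectation defining $u_0^A(\xi,\nu)$, so that $u_0^A(\xi,\nu)=\dbE^{\dbP^\nu}[V_T^\nu]$. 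The whole statement then reduces to showing that $V^\nu$ is a $\dbP^\nu$-supermartingale for every admissible $\nu$, and a true $\dbP^\nu$-martingale when $\nu=\nu^*$.

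Second, under $\dbP^\nu$ one has $dX_t = b(t,X_t,\nu_t)\,dt + \Sigma_t\,dW_t^{\dbP^\nu}$, so Itô's formula applied to $V^\nu=-e^{-R_A(\,\cdot\,)}$ produces a stochastic-integral term with integrand $R_A e^{-R_A(\dots)}Z_t^\top\Sigma_t$ and a finite-variation part whose density is
\[
R_A\,e^{-R_A(\dots)}\Big[\,G(t,X_t,Z_t) - \tfrac{R_A}{2}\|\Sigma_t^\top Z_t\|^2 + b(t,X_t,\nu_t)\cdot Z_t - c(t,X_t,\nu_t)\,\Big].
\]
Substituting the definition of $G$, the bracket collapses to $\big(b(t,X_t,\nu_t)\cdot Z_t - c(t,X_t,\nu_t)\big) - \sup_{\nu'\in A}\big(b(t,X_t,\nu')\cdot Z_t - c(t,X_t,\nu')\big)$, which is $\le 0$ for every $\nu$ and vanishes precisely when $\nu_t=\nu^\star_t(X_t,Z_t)$. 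Since $R_A e^{-R_A(\dots)}>0$, this identifies $V^\nu$ as a local $\dbP^\nu$-supermartingale whose drift vanishes exactly for $\nu=\nu^*$, i.e. $V^{\nu^*}$ is a local martingale.

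Third --- and this is the step I expect to be the main obstacle --- I must upgrade these local statements to true ones. Because $\xi$ need not be bounded and $G$ grows quadratically in $z$, nonpositivity of $V^\nu$ alone is useless here (Fatou yields only the inequality in the wrong direction). The real work is to show that the local-martingale part is a genuine $\dbP^\nu$-martingale, for which I would establish that $\{V_\tau^\nu\}_{\tau\le T}$ is $\dbP^\nu$-uniformly integrable. The membership $Z\in\dbH^2_{BMO}$ is decisive here: through the reverse-H\"older / John--Nirenberg inequality for BMO martingales it gives the exponential integrability of $\int_0^\cdot Z_s\cdot dX_s$, and hence of $\sup_t|V_t^\nu|^p$ for some $p>1$, while Assumptions \ref{assum:si b} and \ref{assum:c}, the growth bounds of the preceding Lemma, and the admissibility of $\nu$ (the martingale property of $M^\nu$, which yields the moment control on $X$ and on the cost term) dominate the drift and cost contributions; one should also check that the BMO property of $Z$ is preserved under the equivalent change of measure $\dbP\to\dbP^\nu$. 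Granting this, $\dbE^{\dbP^\nu}[V_T^\nu]\le V_0^\nu=-e^{-R_A Y_0}$ for every admissible $\nu$; and since $(\nu^*,\dbP^{\nu^*})\in\mathcal A$ by hypothesis, the same estimate makes $V^{\nu^*}$ a true martingale, so the bound is attained. Taking the supremum over $\mathcal A$ yields $U_0^A(\xi)=-e^{-R_A Y_0}$, realised at $\nu^*$, which is therefore the optimal effort.
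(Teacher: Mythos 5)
Your argument is correct in spirit but takes a genuinely different route from the paper's. You run the dynamic verification: define the continuation value $Y_t$, apply It\^o's formula to $V^\nu_t=-e^{-R_A(Y_t-\int_0^t c\,ds)}$, read off the sign of the drift, and then try to upgrade the local super/martingale statements to true ones via uniform integrability. The paper instead argues statically and pathwise: it bounds the exponent using $G(t,X_t,Z_t)-c(t,X_t,\nu_t)\le \tfrac{R_A}{2}\|\Sigma_t^\top Z_t\|^2-b(t,X_t,\nu_t)\cdot Z_t$ (with equality iff $\nu_t=\nu^\star_t(X_t,Z_t)$), substitutes $dX_t=b\,dt+\Sigma_t\,dW^{\dbP^\nu}_t$ so that the drift term cancels against the stochastic integral, and recognizes the resulting upper bound as $-e^{-R_A Y_0}\,\mathcal E\big(-R_A\int_0^\cdot Z_s\cdot\Sigma_s\,dW^{\dbP^\nu}_s\big)_T$, whose $\dbP^\nu$-expectation equals $-e^{-R_A Y_0}$ because $Z\in\dbH^2_{BMO}$ makes this stochastic exponential a true martingale. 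The two computations manipulate the same quantities; the difference is the order in which the comparison and the expectation are taken, and that order matters.

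The weak point of your route is precisely the third step you flag, and it is more serious than you acknowledge. You have $|V^\nu_t|=e^{-R_A Y_t}\,e^{R_A\int_0^t c(s,X_s,\nu_s)ds}$, and admissibility of $\nu$ only requires $M^\nu$ to be a martingale; it imposes no moment control on $\nu$ or on $\int_0^T c(t,X_t,\nu_t)dt$. Hence the factor $e^{R_A\int_0^\cdot c}$ need not admit moments of order $p>1$, and $\sup_t|V^\nu_t|$ need not be integrable, so the uniform integrability of $\{V^\nu_\tau\}$ cannot in general be ``dominated'' as you propose (BMO of $Z$ controls the exponential of the stochastic integral, not the cost term). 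The paper's ordering avoids this entirely: the pointwise inequality discards the cost and the drift before any expectation is taken, so the only object whose martingale property must be verified is $\mathcal E(-R_A\int Z\cdot\Sigma\,dW^{\dbP^\nu})$, for which BMO suffices. If you wish to keep the verification formulation, compare $V^\nu$ pathwise with the process $-e^{-R_A Y_0}\,\mathcal E(-R_A\int_0^\cdot Z\cdot\Sigma\,dW^{\dbP^\nu})$ rather than attempting to prove uniform integrability of $V^\nu$ itself. Your identification of the drift sign, of the equality case $\nu=\nu^*$, and your (correct) observation that nonpositivity plus Fatou gives the wrong inequality, all match the substance of the paper's argument.
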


\begin{proof}
Note that $\dbP$ and $\dbP^\nu$ are equivalent measures for all admissible efforts. Using the semimartingale representation of $\xi\cd 1_N$, we have for admissible $\nu$ that
\beaa
u^A_0(\xi,\nu) &=& \dbE^{\dbP^\nu}\Big[ -\exp\Big(-R_A\big( \xi\cdot \mathbf 1_N -\int_0^T c(t,X_t,\nu_t)dt\big)\Big)\Big]\\
&=& \dbE^{\dbP^\nu}\Big[ -\exp\Big(-R_A\big( Y_0 + \int_0^T  \big(G(t,X_t, Z_t)-c(t,X_t,\nu_t)\big) dt +\int_0^T Z_t \cdot  dX_t \big)\Big)\Big]\\
&\le & \dbE^{\dbP^\nu}\Big[ -\exp\Big(-R_A\big( Y_0 + \int_0^T  \big( \frac{R_A}2 \| \Sigma_t^\top Z_t \|^2- b(t, X_t,\nu_t)\cdot Z_t\big) dt +\int_t^T Z_t \cdot  dX_t\big)\Big)\Big]\\
& =& \dbE^{\dbP^\nu}\Big[ -\exp\Big(-R_A\big( Y_0 + \int_0^T   \frac{R_A}2 \| \Sigma_t^\top Z_t \|^2 dt +\int_t^T Z_t \cdot  \Si dW^{\dbP^\nu}_t\big)\Big)\Big]\\
& = & -e^{-R_A Y_0}.
\eeaa
The last equality is due to the fact that $Z\in \dbH^2_{BMO}$, and the inequality is an equality if $\nu_t = \nu^*_t(X_t,Z_t)$.
\end{proof}

\begin{remark}
One sufficient condition for $(\nu^*,\dbP^{\nu^*})\in\Ac$ can be $\underline{m}\ge 1$, so that $\nu^*\in \dbH^2_{BMO}$ and thus $(\nu^*,\dbP^{\nu^*})\in\Ac$.
\end{remark}

\section{Solving the Principals problem}\label{sec:Principal prob}

As mentioned before, the main difficulty here is that we do not have a semimartingale representation for the component $\xi^i$, when solving the Agent problem. Instead, we only have a such representation for the sum of the salaries, $\xi\cdot {\bf 1}_N$. 

In this paper, we are only interested in Nash equilibriums in the following sets of contracts:
\begin{align*}
\widetilde{\mathcal C}_N&:= \Bigg\{\xi\in \mathcal C_N:\; \exists (y,\alpha,\beta)\in \dbR^N\times \mathcal P(\mathbb R^N)\times \mathbb H^2_{BMO}(\mathbb R^{N,N}),\\
&\hspace{2.5em} \xi^i=y^i+\int_0^T \alpha_s^ids+\int_0^T \beta_s^{:,i}\cdot dX_s,\, \forall 1\leq i\leq N \Bigg\},
\end{align*}
where  $\mathbb H^2_{BMO}(\mathbb R^{N,N})$ extends Definition \ref{bmo} to $\mathbb R^{N,N}$ valued process.

\noindent More intuitively speaking, we search for Nash equilibriums among the contracts with semimartingale form. Therefore, at such a Nash equilibrium, each Principal $i$ should maximize her utility among the following contracts:
\bea\label{def:admissiblecontract}
\Cc^i(\xi^{-i}) ~:=~ \{\xi^i: \xi\in \tilde\Cc_N\},
\eea
where $\xi^{-i}=(\xi^1,\cdots,\xi^{i-1},\xi^{i+1},\cdots,\xi^{N} )^\top$. Without ambiguity, we will only denote the set by $\Cc^i$. 

Note that, similarly to \cite[$(i')$]{BernheimWhinston}, the problem of the $i$th Principal \eqref{pbprincipali} can be rewritten given the contracts $\xi^j$ of the other Principals $j\neq i$
  
   \begin{equation}\label{pbPrincipaliBis}
\sup_{\xi^i\in \mathcal C^i} \mathbb E^{\nu^{\star}(\xi)}\left[ \mathcal K_0^T U_{P_i}\left(\ell_i(X_T)-\xi\cdot \mathbf 1_N + \sum_{j\neq i} \xi^j\right)\right].
 \end{equation}

In the following sections, we try to characterize the Nash equilibriums, guided by the following ideas.

\begin{itemize}

\item The definition in \eqref{def:admissiblecontract} immediately leads to a condition for equilibrium.

\item For each principal, the optimization \eqref{pbPrincipaliBis} can be transformed to a standard stochastic control problem, which leads to an HJB equation. Therefore, we naturally obtain a system of $N$ coupled HJB equations associated with the problems of Principals.
Using classical verification argument, we can show that this system of HJB equations provides a Nash equilibrium for the Principals's problems in the class of contracts.

\item In particular, if all the Principals are risk-neutral, the stochastic control problem for each Principal is associated to a semilinear HJB equation, and thus enjoys a better solvability.
\end{itemize}
\ms

\begin{remark}
We would like to comment and explain why this class of contract seems to be reasonable in our model. First of all, this class of contract is non-Markovian and it is well known that in the single Principal-Agent model, as soon as one assume that the dynamic of the output is boosted by some parameter $\alpha$, \textit{i.e.} $b(t,x,a):= \alpha x+a_t$, one gets non-Markovian contracts. Thus, there is no reason to consider a subset of $\widetilde{\mathcal C}_N$ only containing functions of $X_T$.\vspace{0.3em}

\noindent The second remark is that one could think that since any contract $\xi^i$ is an $\mathcal F_T-$measurable random variable with nice integrability property, by considering $Y_t:= \mathbb E[\xi^i|\mathcal F_t]$, we get from the martingale representation theorem that there exists some process $Z$ such that $\xi^i= \mathbb E[\xi^i] +\int_0^T Z_t\cdot dW_s$. Thus, $\xi^i$ can be seen as the terminal value of a stochastic integral. However, we will show below that due to the Stackelberg game between the Agent and the Principals, this class of contracts is not robust and does not contain a Nash equilibrium (see  Proposition \ref{cor:necessary} below).\vspace{0.5em}

\noindent Finally we would like to insist on the fact that this restriction on components $\xi^i$ is allowed for our work because our aim is to get only the existence of a Nash equilibrium. An extension of this work could be to consider a larger class of contracts included in $\mathcal C_N$ such that the components $\xi^i$ have not necessarily the semi-martingale decomposition with $\alpha$ and $\beta$ and compare the two classes (in term of Pareto efficiency for instance). This question seems however very difficult and is not at the heart of our work.\end{remark}

\noindent Before going further let us explain more the meaning of the class of contracts $\widetilde{\mathcal C}_N$. Any salary $\xi^i$ given by the $i$th Principal depends on the projects of other Principal $X^j,\; j\neq i$. We provide some interpretations of this phenomenon. 

\paragraph*{Interpretation of the model}
{\it First, we can assume that the $i$th Principal promises to the Agent a part of the value of her firm but also a part of the outcome of an other Principal. Second, some externalities can appear as a \textit{network effect} (see for instance \cite{laffont1989externalities, buchanan1962externality, liebowitz1994network} for more explanations with definition of this phenomenon). This fits typically with the introductive example with the beekeeper and crop-owners, since each crop-owner can benefit from the beehives in the other crops through the pollinisation process. An other interpretation is to consider a parent firm and some subsidiaries (the $N$ Principals) who employed the same Agent. For instance and in concrete terms, this model is quite suitable in the example of the German firm Sonnen GmbH in which Agents are connected to produce, use and share energy. }

\noindent \begin{remark}\label{remark:freerider}The case in which a Principal has a free-rider behaviour by proposing no remuneration to the Agent and the other Principal compensates fully the Agent is not an equilibrium due to the common agency framework. Indeed, as showed in Theorem \ref{thm:principal:nash} below, by taking $\alpha^{i,\star}=0$, $\beta^{i,\star}$ is not equal to zero.
\end{remark}

\subsection{Equilibrium conditions}

\begin{proposition}[Necessary condition of Nash equilibrium] \label{cor:necessary}
Let $\xi^\star=(\xi^{i,\star})_{1\leq i\leq N}$ be a Nash equilibrium in $\widetilde C_N$ such that $\xi^{i,\star}$ is characterized by the triplet $(y^i,\alpha^{i,\star},\beta^{i,\star})$. Then, the following equilibrium condition holds for $(y^i,\alpha^{i,\star},\beta^{i,\star})_{1\leq i\leq N}$:
\begin{equation}\label{eqsystem}
\begin{cases}
\displaystyle \sum_{i=1}^N y^i=Y_0,\\[0.5em]
\displaystyle\sum_{i=1}^N\alpha^i_s=G(s,X_s, \sum_{i=1}^N \beta_s^i )\\[0.5em]
\displaystyle \sum_{i=1}^N \beta_s^i= Z_s.
\end{cases}
\end{equation}
\end{proposition}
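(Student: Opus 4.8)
The plan is to read off the three identities by matching the two It\^o decompositions that the aggregate salary $\xi^\star\cdot\mathbf 1_N$ inherits from the two memberships $\xi^\star\in\widetilde{\mathcal C}_N\subset\mathcal C_N$; the Nash/optimality property itself will only be used to guarantee $\xi^\star\in\widetilde{\mathcal C}_N$, since the identities in \eqref{eqsystem} are feasibility conditions that do not involve the payoffs $U_{P_i}$ or the liquidation functions $\ell_i$. First I would record that, as $\xi^\star\in\widetilde{\mathcal C}_N$, each component is represented by its triple, $\xi^{i,\star}=y^i+\int_0^T\alpha^{i,\star}_s\,ds+\int_0^T\beta^{i,\star}_s\cdot dX_s$, so that summing over $i$ yields $\xi^\star\cdot\mathbf 1_N=\sum_i y^i+\int_0^T\big(\sum_i\alpha^{i,\star}_s\big)\,ds+\int_0^T\big(\sum_i\beta^{i,\star}_s\big)\cdot dX_s$. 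On the other hand, membership in $\mathcal C_N$ furnishes the generator representation $\xi^\star\cdot\mathbf 1_N=Y_0+\int_0^T G(s,X_s,Z_s)\,ds+\int_0^T Z_s\cdot dX_s$ with $Z\in\dbH^2_{BMO}$. Both right-hand sides are terminal values of continuous semimartingales, and the whole argument reduces to identifying their ingredients.

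To perform this identification I would work under the reference measure $\dbP$ of \eqref{P0}, under which $X$ is a (local) martingale, so that in each representation the integral against $dX$ is the local-martingale part while the $ds$-integral is the finite-variation part; the BMO property of $Z$ and of the $\beta^{i,\star}$ (Definition \ref{bmo}) upgrades these local martingales to genuine $\dbP$-martingales and secures the required integrability. By uniqueness of the decomposition of a continuous semimartingale I would then match the three pieces: equality of the martingale parts gives $\int_0^\cdot\big(Z_s-\sum_i\beta^{i,\star}_s\big)\cdot dX_s\equiv 0$, whence, since $\Sigma$ is invertible (Assumption \ref{assum:si b}), $Z_s=\sum_i\beta^{i,\star}_s$ for $ds\otimes d\dbP$-a.e.\ $(s,\omega)$; equality of the finite-variation parts then gives $\sum_i\alpha^{i,\star}_s=G(s,X_s,Z_s)$, which combined with the previous line is exactly $\sum_i\alpha^{i,\star}_s=G(s,X_s,\sum_i\beta^{i,\star}_s)$; and equality of the deterministic initial values gives $\sum_i y^i=Y_0$. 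These are precisely the three lines of \eqref{eqsystem}.

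The delicate point, and the step I would spend the most care on, is justifying that the two representations describe the \emph{same} semimartingale on all of $[0,T]$ rather than merely agreeing at the terminal time $T$: without this, a constant may be traded between the drift and the initial value and no coefficient matching is possible. Here I would invoke that the generator representation is exactly the one produced when solving the Agent's problem (Proposition \ref{prop:agentpb}): the process $Y_t:=Y_0+\int_0^t G\,ds+\int_0^t Z\cdot dX$ is the (log-)continuation utility attached to the aggregate salary, characterised as the unique solution, with $Z\in\dbH^2_{BMO}$, of the quadratic backward SDE with generator $G$ and terminal datum $\xi^\star\cdot\mathbf 1_N$. Identifying the aggregate promised-value process $\sum_i\big(y^i+\int_0^\cdot\alpha^{i,\star}_s\,ds+\int_0^\cdot\beta^{i,\star}_s\cdot dX_s\big)$, whose terminal value is $\xi^\star\cdot\mathbf 1_N$, with this continuation value forces the two decompositions to coincide as processes, after which the coefficient matching above is immediate. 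Once this identification is secured, the remaining computations are entirely routine.
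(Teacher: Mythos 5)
There is a genuine gap, and it sits exactly at the step you yourself flagged as delicate. Matching the two decompositions requires knowing that the process $\tilde Y_t := \sum_i \big(y^i + \int_0^t \alpha^{i,\star}_s\,ds + \int_0^t\beta^{i,\star}_s\cdot dX_s\big)$ and the process $Y_t := Y_0 + \int_0^t G(s,X_s,Z_s)\,ds + \int_0^t Z_s\cdot dX_s$ coincide \emph{as processes}, not merely at time $T$; agreement of the terminal values alone proves nothing, since a given $\mathcal F_T$-measurable random variable admits many representations of the form (initial value) $+$ (drift) $+$ (stochastic integral) --- for instance $\int_0^T W_s\,ds - \int_0^T (T-s)\,dW_s = 0$, so one can perturb a triplet $(y^i,\alpha^i,\beta^i)$ without changing $\xi^i$. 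Your proposed fix --- invoking uniqueness of the quadratic BSDE with generator $G$ and terminal datum $\xi^\star\cdot\mathbf 1_N$ --- is circular: BSDE uniqueness identifies $\tilde Y$ with $Y$ only if $(\tilde Y, \sum_i\beta^{i,\star})$ is itself a solution of that BSDE, i.e.\ only if its drift equals $G(s,X_s,\sum_i\beta^{i,\star}_s)$, which is precisely the second line of \eqref{eqsystem} that you are trying to prove. Nothing in the definition of $\widetilde{\mathcal C}_N$ forces the component triplets to aggregate to the canonical $(Y_0,G,Z)$ representation, and the Nash property is not used anywhere in your argument to supply the missing constraint.

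For comparison, the paper's own proof is a one-line appeal to the definitions \eqref{def:CN} and \eqref{def:admissiblecontract}: the authors in effect treat \eqref{eqsystem} as the normalization selecting which representative triplet $(y^i,\alpha^{i,\star},\beta^{i,\star})$ is attached to $\xi^{i,\star}$, and this is how the condition is used downstream, where $\sum_i\alpha^i_s=G(s,X_s,\sum_i\beta^i_s)$ is imposed directly as a constraint on the admissible controls in each Principal's optimization. Read as the existence of such a representative, the proposition is indeed immediate from summing the component representations and taking the aggregate to be the $\mathcal C_N$-decomposition; read as a statement about an arbitrary triplet representing $\xi^{i,\star}$, it fails by the non-uniqueness above. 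Your write-up aims at the stronger reading, so it needs either to build in the normalization explicitly or to supply a genuinely new argument; as it stands, the identification step does not go through.
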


\begin{proof}
The result simply follows from the definition of $\Cc_N$ \eqref{def:CN} and that of $\Cc^i$ \eqref{def:admissiblecontract}.
\end{proof}

\subsection{General case with fully-nonlinear HJB equations} 

We are going to fully characterize a Nash equilibrium. We start from the problem of the $i$th Principal \eqref{pbPrincipaliBis}. Denote $r_0 := - \frac{\ln (-R_0)}{R_A}$, so that we have
\beaa
U^A_0(\xi) \ge R_0 \iff Y_0 \ge r_0,
\eeaa
thanks to Proposition \ref{prop:agentpb}.
It follows from Proposition \ref{cor:necessary} that 
\beaa
U_0^{P_i}(x)
& = & \sup_{\xi^i\in \mathcal C^i} \mathbb E^{\nu^{\star}}\left[ \mathcal K_0^T U_{P_i}\left(\ell_i(X_T)-\xi\cdot \mathbf 1_N + \sum_{j\neq i} \xi^j\right)\right] \\
& =&\sup_{y^i\geq r_0-\sum_{j\neq i} y^j} \underset{\sum_{i=1}^N\alpha^i_s=G(s,X_s, \sum_{i=1}^N \beta_s^i)} {\sup_{(\alpha^i,\beta^i)\in \mathcal P(\mathbb R)\times \dbH^2_{BMO}}}\mathbb E^{\nu^{\star}}\left[ \mathcal K_0^T U_{P_i}(\ell_i(X_T)-Y_T^{y^i,\alpha^i,\beta^i})\right]\\
&=&\sup_{y^i\geq r_0-\sum_{j\neq i} y^j} \, u^{i}(0,x,y^i),\\
&=&  u^{i}\big(0,x,r_0-\sum_{j\neq i} y^j\big),
\eeaa
 where
\begin{equation}\label{pb:prncipaliyi}
u^{i}(0,x,y^i):= \sup_{\beta^i\in   \dbH^2_{BMO}}\mathbb E^{\nu^{\star}}\left[\mathcal K_0^T  U_{P_i}\left(\ell(X_T)-Y_T^{y^i,\alpha^i,\beta^i}\right)\right],
\end{equation} 
and
 \begin{equation}\label{Ydynamic}
 Y_T^{y^i,\alpha^i,\beta^i}= y^i+\int_0^T \left(G\left(s,X_s, S_{\beta^{(-i)}_s}+ \beta_s^i\right)-S_{\alpha_s^{(-i)}}\right) ds+\int_0^T \beta_s^{i}\cdot dX_s, 
 \end{equation}
 where $S_{\alpha^{(-i)}}:=\sum_{j\neq i}\alpha^j$ and $S_{\beta^{(-i)}}:=\sum_{j\neq i}\beta^j$ and with fixed $(\alpha_j,\beta_j)_{j\neq i}$. 
 
\noindent The problem of the Principal \eqref{pb:prncipaliyi} coincides with a stochastic control problem with the following characteristics:
\begin{itemize}
\item two state variables: the output $X$ and the value process $Y^{y^i, \alpha^i,\beta^i}$;

\item one control variable: the coefficient $\beta^i$.
\end{itemize}
It is associated with the following HJB equation :
\begin{equation}\label{hjbi}
\begin{cases}
\displaystyle -(\partial_t u^i- ku^i)(t,x,y)-\sup_{\beta^i \in \mathbb R^N} H(t,x,y,\nabla_x u^i,\partial_y u^i,\nabla_x^2 u^i,\partial^2_{yy}u^i,\partial^2_{x,y} u^i,S_{\alpha_s^{(-i)}},S_{\beta_s^{(-i)}},\beta^i)= 0,\\[0.8em]
\displaystyle u^i(T,x,y)=U_{P^i}(\ell_i(x)-y),\; (t,x,y)\in [0,T)\times \mathbb R^N\times\mathbb R.
\end{cases} 
\end{equation}
with Hamiltonian $H$ defined for any $(t,x,p,\tilde p,q,\tilde q,r,s_a,s_b,\mathfrak b)\in [0,T]\times \mathbb R^N\times \mathbb R^N\times \mathbb R\times\mathbb R^{N,N}\times \mathbb R\times \mathbb R^{N,N}\times \mathbb R\times \mathbb R^N\times \mathbb R^N $ by
\begin{multline*}
H(t,x,p,\tilde p, q,\tilde q,r,s_a,s_b,\mathfrak b)
\\
:=p\cdot b(t,x, \nu^\star(x,\mathfrak b+s_b)) +\tilde p \Big( G(s,x,\mathfrak b+s_b)-s_a+ \mathfrak b\cdot b(t,x, \nu^\star(x,\mathfrak b+s_b))\Big)\\
+\frac12 \text{Tr}(\Sigma_t\Sigma_{t}^\top q) + \frac12 \text{Tr}(\mathfrak b^\top \Sigma_t \Sigma_t^\top\mathfrak b\tilde q)+\text{Tr}(\mathbf 1_N^\top\Sigma_t\Sigma_{t}^\top\mathfrak b r)
\end{multline*}

\begin{assumption}[$\textbf{FOC}$]\label{firstorder}
There exists a maximizer $\beta^{i,\star}$ for $\mathfrak b\longmapsto H(t,x,p,\tilde p, q,\tilde q,r,s_a,s_b,\mathfrak b)$ satisfying some first order condition given by the relation
\begin{equation}\label{FOC}
\lambda^i(t,x,p,\tilde p, \tilde q,r,s_b, \beta^{i,\star}_t)
~:=~ \nabla_{\mathfrak b} H(t,x,p,\tilde p, q,\tilde q,r,s_a,s_b,\mathfrak b)
~=~0.
\end{equation}
\end{assumption}

\noindent Then, the problem of the $i$th Principal can be solved by a classical verification theorem.\vspace{0.5em}

\begin{proposition}[Verification]\label{prop:Pi} 
Let $\xi\in \widetilde {\mathcal C}_N$ and denote by $(y^j,\alpha^j,\beta^j)$ the triplet associated with $\xi^j, \, j\neq i$. Assume that there exists a solution $u^i$ to HJB equation \eqref{hjbi} in $\mathcal C^{1,2,2}([0,T]\times \mathbb R^N\times \mathbb R)$. Then, by denoting $\beta^{i,\star}_s$ the corresponding maximizer, we deduce that $U_0^{P_i}(x)= u^i(0,x,r_0 -\sum_{j\neq i}y^j)$ and the optimal contract $\xi^{i,\star}$ proposed by the $i$th Principal is
$$\xi^{i,\star}=r_0-\sum_{j\neq i}y_0^j+\int_0^T\alpha_s^{i,\star}ds+\int_0^T\beta_s^{i,\star}\cdot dX_s, $$
with 
$$\alpha_s^{i,\star}=G\left(s,X_s, \sum_{j\neq i}\beta_s^{j}+ \beta_s^{i,\star}\right)-\sum_{j\neq i}\alpha_s^{j}. $$
\end{proposition}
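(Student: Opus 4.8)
This is a classical stochastic-control verification statement, so the plan is to apply It\^o's formula to the candidate function along the controlled dynamics and to exploit the HJB equation \eqref{hjbi}; the function $u^i$ solving \eqref{hjbi} will be shown to coincide with the value function $u^i(0,\cdot,\cdot)$ of \eqref{pb:prncipaliyi}. Fix $i$ and the triplets $(y^j,\alpha^j,\beta^j)_{j\neq i}$, and take an arbitrary admissible $\beta^i\in\dbH^2_{BMO}$, which together with $y^i$ and $\alpha^i$ determines $Y^{y^i,\alpha^i,\beta^i}$ through \eqref{Ydynamic}. First I would record the dynamics of the two state variables under $\dbP^{\nu^\star}$, with $\nu^\star=\nu^\star(X,S_{\beta^{(-i)}}+\beta^i)$: the output $X$ has drift $b(t,X_t,\nu^\star_t)$ and diffusion $\Sigma_t$, while
$$dY_t=\Big(G(t,X_t,S_{\beta^{(-i)}_t}+\beta^i_t)-S_{\alpha^{(-i)}_t}+\beta^i_t\cdot b(t,X_t,\nu^\star_t)\Big)dt+(\beta^i_t)^\top\Sigma_t\,dW^{\dbP^{\nu^\star}}_t.$$
Note that the reference measure $\dbP^{\nu^\star}$ itself depends on the control $\beta^i$ through $Z=S_{\beta^{(-i)}}+\beta^i$, but this is precisely accounted for by the occurrence of $b(t,x,\nu^\star(x,\mathfrak b+s_b))$ inside the Hamiltonian $H$, so the two are consistent.

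Next I would apply It\^o's formula to $t\mapsto\mathcal K_0^t u^i(t,X_t,Y_t)$ for $u^i\in\mathcal C^{1,2,2}$. With $d\langle X\rangle_t=\Sigma_t\Sigma_t^\top dt$, $d\langle Y\rangle_t=(\beta^i_t)^\top\Sigma_t\Sigma_t^\top\beta^i_t\,dt$ and $d\langle X,Y\rangle_t=\Sigma_t\Sigma_t^\top\beta^i_t\,dt$, a direct computation identifies the finite-variation part of $\mathcal K_0^t u^i(t,X_t,Y_t)$ with
$$\mathcal K_0^t\Big(\partial_t u^i-k_t u^i+H\big(t,X_t,\nabla_x u^i,\partial_y u^i,\nabla^2_x u^i,\partial^2_{yy}u^i,\partial^2_{x,y}u^i,S_{\alpha^{(-i)}_t},S_{\beta^{(-i)}_t},\beta^i_t\big)\Big)dt,$$
where $H$ is the Hamiltonian defined before Assumption \ref{firstorder} (the $-k_tu^i$ term coming from differentiating $\mathcal K_0^t$). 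Since \eqref{hjbi} gives $\partial_t u^i-k_t u^i+\sup_{\mathfrak b}H=0$, this drift is nonpositive for every admissible $\beta^i$ and vanishes when $\beta^i=\beta^{i,\star}$ is the maximizer furnished by Assumption \ref{firstorder}. Hence $\mathcal K_0^t u^i(t,X_t,Y_t)$ is a supermartingale for arbitrary $\beta^i$ and a local martingale for $\beta^{i,\star}$.

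Granting that these local martingale parts are genuine martingales, evaluating at $t=T$ with the terminal datum $u^i(T,x,y)=U_{P^i}(\ell_i(x)-y)$ yields $u^i(0,x,y^i)\ge\mathbb E^{\nu^\star}[\mathcal K_0^T U_{P_i}(\ell_i(X_T)-Y_T)]$ for every $\beta^i$ and equality for $\beta^{i,\star}$. Taking the supremum over $\beta^i$ then shows that $u^i$ coincides with the value function of \eqref{pb:prncipaliyi} and that $\beta^{i,\star}$ is optimal. It remains to specialise $y^i=r_0-\sum_{j\neq i}y^j$: since $U_0^A(\xi)\ge R_0\iff Y_0\ge r_0$ and $y\mapsto u^i(0,x,y)$ is nonincreasing (because $U_{P_i}$ is increasing and a larger $y^i$ raises the salary $Y_T$), this is the optimal admissible choice and gives $U_0^{P_i}(x)=u^i(0,x,r_0-\sum_{j\neq i}y^j)$. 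The expression for $\xi^{i,\star}$ is then \eqref{Ydynamic} read at $(r_0-\sum_{j\neq i}y^j,\alpha^{i,\star},\beta^{i,\star})$, the formula for $\alpha^{i,\star}$ being the equilibrium constraint $\sum_i\alpha^i=G(\cdot,\sum_i\beta^i)$ from \eqref{eqsystem} solved for $\alpha^i$.

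The main obstacle is exactly the local-to-true martingale passage, i.e. showing that $\int\mathcal K_0^s(\nabla_x u^i)^\top\Sigma_s\,dW^{\dbP^{\nu^\star}}_s$ and $\int\mathcal K_0^s\partial_y u^i\,(\beta^i_s)^\top\Sigma_s\,dW^{\dbP^{\nu^\star}}_s$ are true martingales under $\dbP^{\nu^\star}$. I would handle this by localising along stopping times, using the supermartingale inequality on each piece and passing to the limit by Fatou for the general inequality, and by establishing uniform integrability for $\beta^{i,\star}$ from the $\dbH^2_{BMO}$ bound on $\beta^i$ (which yields exponential moments via the energy and John--Nirenberg inequalities), the linear growth of $\ell_i$, and the a priori growth bounds on $u^i$ and its derivatives, together with moment estimates on $X$ and $Y$ under $\dbP^{\nu^\star}$. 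A secondary point to verify is that the feedback maximizer $\beta^{i,\star}$ indeed defines an admissible control in $\dbH^2_{BMO}$, so that the resulting $\xi^{i,\star}$ genuinely belongs to $\widetilde{\mathcal C}_N$.
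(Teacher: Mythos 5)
Your argument is correct and is precisely the classical verification argument that the paper invokes for this proposition without writing it out (the text merely states that ``the problem of the $i$th Principal can be solved by a classical verification theorem'' and gives no proof): It\^o's formula on $\mathcal K_0^t u^i(t,X_t,Y_t)$, the supermartingale/martingale dichotomy from the HJB equation \eqref{hjbi}, saturation of the participation constraint $y^i\ge r_0-\sum_{j\neq i}y^j$ by monotonicity of $u^i$ in $y$, and the formula for $\alpha^{i,\star}$ from the equilibrium condition \eqref{eqsystem}. You are in fact more careful than the paper in isolating the genuine technical content --- the local-to-true martingale passage and the admissibility of the feedback control $\beta^{i,\star}$ in $\dbH^2_{BMO}$ --- and your proposed treatment of these points is reasonable.
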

\ms 

Since we solve each Principal's problem using an HJB equation, we naturally obtain a system of $N$ HJB equations. 
Similarly to \cite[Theorems 8.4 and 8.5]{Dockner}, one can derive from the system of equations a Nash equilibrium in the sense of Definition \ref{def:nash}.

We shall assume that there are solutions to PDEs \eqref{hjbi} for all $i=1,\cdots,N$, which also satisfy the equilibrium condition \eqref{cor:necessary}.
\ms

\begin{assumption}\label{firstorderN}For any $i\in \{1,\dots,N\}$ and $(t,x,y,p^i,\tilde p^i,\tilde q^i,r^i)\in [0,T]\times \mathbb R^N\times \mathbb R\times \mathbb R^N\times \mathbb R\times \mathbb R\times \mathbb R$ there exists $(\alpha^{i,\star}\left(t,x,y,(p^i,\tilde p^i,\tilde q^i,r^i)_{1\leq i\leq N})\right)_{1\leq i\leq N},\beta^{i,\star}\left(t,x,y,(p^i,\tilde p^i,\tilde q^i,r^i)_{1\leq i\leq N})\right)_{1\leq i\leq N}$ such that for any $1\leq i\leq N$
\begin{equation}\label{firstordercondition}
\begin{cases}
\displaystyle \lambda^i\left(t,x,y,p^i,\tilde p^i,\tilde q^i,r^i,\sum_{j\neq i} \beta^{j,\star}, \beta^{i,\star}\right)=0,\\
\displaystyle \sum_{i=1}^N \alpha^{i,\star}_t= G\left(t,x, \sum_{i=1}^N\beta^{i,\star}_t\right).
\end{cases}
\end{equation}
Moreover, we assume that the following system of fully coupled PDEs admits a solution $u=(u^i)_{1\le i\le N}$ such that $u^i\in C^{1,2,2}$ for all $1\le i\le N$:
\begin{equation}\label{hjbN}
\begin{cases}
\displaystyle \mathcal L\left(t,x,y,u^i,\nabla_x u^i,\partial_y u^i,\nabla_x^2 u^i,\partial^2_{yy}u^i,\partial^2_{x,y} u^i,S_{\alpha^{(-i)}},S_{\beta^{(-i)}},\beta^{i,\star} \right)= k_tu^i,\\[0.8em]
\displaystyle u^i(T,x,y)=U_{P^i}(\ell_i(x)-y),\; (t,x,y)\in [0,T)\times\mathbb R^N\times \mathbb R.
\end{cases} 
\end{equation}
with $\mathcal L:=\partial_t+H$, 
$$\beta^{i,\star}:= \beta^{i,\star}\left(t,x,y,(\nabla_x u^i,\partial_y u^i,\partial^2_{yy}u^i,\partial^2_{x,y} u^i)_{1\leq i\leq N}\right) $$
 $$S_{\alpha^{(-i)}}:= \sum_{j\neq i} \alpha^{j,\star}\left(t,x,y,(\nabla_x u^i,\partial_y u^i,\partial^2_{yy}u^i,\partial^2_{x,y} u^i)_{1\leq i\leq N}\right)$$ and 
 $$S_{\beta^{(-i)}}:=\sum_{j\neq i} \beta^{j,\star}\left(t,x,y,(\nabla_x u^i,\partial_y u^i,\partial^2_{yy}u^i,\partial^2_{x,y} u^i)_{1\leq i\leq N}\right).$$
\end{assumption}\vspace{0.5em}

\begin{theorem}\label{thm:principal:nash}
Let Assumption \ref{firstorderN} be true. Further, define for any $1\leq i\leq N$ 
 $$\xi^{i,\star}=y_i+\int_0^T\alpha_s^{i,\star}ds+\int_0^T\beta_s^{i,\star}\cdot dX_s,$$
  with $$\alpha^{i,\star}_t:= \alpha^{i,\star}\left(t,x,y,(\nabla_x u^i,\partial_y u^i,\partial^2_{yy}u^i,\partial^2_{x,y} u^i)_{1\leq i\leq N}\right)$$ and $$\beta^{i,\star}_t:= \beta^{i,\star}\left(t,x,y,(\nabla_x u^i,\partial_y u^i,\partial^2_{yy}u^i,\partial^2_{x,y} u^i)_{1\leq i\leq N}\right).$$
Then, the set of contract $(\xi^{i,\star})_{1\leq i\leq N}$ is an admissible Nash equilibrium.
\end{theorem}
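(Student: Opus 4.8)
The plan is to verify directly the two requirements in Definition \ref{def:nash}: that each component $\xi^{i,\star}$ is a best response against the others $(\xi^{j,\star})_{j\neq i}$, and that the aggregate contract satisfies the participation constraint $U_0^A(\xi^\star)\geq R_0$ while remaining in $\widetilde{\mathcal C}_N$. The central observation that makes this tractable is that a solution $(u^i)_{1\leq i\leq N}$ of the fully coupled system \eqref{hjbN} decouples, Principal by Principal, into the single-agent HJB equation \eqref{hjbi}: once the strategies of the other Principals are frozen, namely $S_{\alpha^{(-i)}}=\sum_{j\neq i}\alpha^{j,\star}$ and $S_{\beta^{(-i)}}=\sum_{j\neq i}\beta^{j,\star}$ as generated by the common solution $u$, the $i$th equation of \eqref{hjbN} is precisely \eqref{hjbi} with these inputs. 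Assumption \ref{firstorderN} supplies both the maximizer $\beta^{i,\star}$ realizing the first-order condition \eqref{FOC} and the regularity $u^i\in\mathcal C^{1,2,2}$ needed for a verification argument.

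Given this, the best-response property follows from Proposition \ref{prop:Pi}. First I would fix $i$, regard $(\xi^{j,\star})_{j\neq i}$ as given, and rewrite Principal $i$'s problem in the reduced form \eqref{pbPrincipaliBis} and \eqref{pb:prncipaliyi}, in which the controlled state is $(X,Y^{y^i,\alpha^i,\beta^i})$ with dynamics \eqref{Ydynamic} and the single control is $\beta^i$. Applying Proposition \ref{prop:Pi} with the frozen inputs yields that the supremum in \eqref{pbPrincipaliBis} equals $u^i(0,x,r_0-\sum_{j\neq i}y^j)$ and is attained at $\beta^{i,\star}$, with accompanying drift $\alpha^{i,\star}_s=G(s,X_s,\sum_{j\neq i}\beta^{j,\star}_s+\beta^{i,\star}_s)-\sum_{j\neq i}\alpha^{j,\star}_s$, which is equivalent to the second relation in \eqref{firstordercondition}. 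Since this holds simultaneously for every $i$ with one and the same solution $u$, no Principal can improve by a unilateral deviation within $\mathcal C^i$, which is the best-response equality required in Definition \ref{def:nash}.

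It then remains to check admissibility, i.e. $\xi^\star\in\widetilde{\mathcal C}_N$ together with $U_0^A(\xi^\star)\geq R_0$. Summing the representations of the $\xi^{i,\star}$ and invoking the second line of \eqref{firstordercondition}, $\sum_{i=1}^N\alpha^{i,\star}_t=G(t,X_t,\sum_{i=1}^N\beta^{i,\star}_t)$, I would recover the aggregate decomposition $\xi^\star\cdot\mathbf 1_N=Y_0+\int_0^T G(t,X_t,Z_t)\,dt+\int_0^T Z_t\cdot dX_t$ with $Z=\sum_{i=1}^N\beta^{i,\star}$ and $Y_0=\sum_{i=1}^N y_i$; this is exactly the equilibrium system \eqref{eqsystem} of Proposition \ref{cor:necessary}. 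Choosing the offsets so that each Principal binds her participation constraint, $y^i=r_0-\sum_{j\neq i}y^j$, forces $\sum_{i=1}^N y_i=r_0$, hence $Y_0=r_0$ and, by Proposition \ref{prop:agentpb}, $U_0^A(\xi^\star)=-e^{-R_A r_0}=R_0$. Finally $Z\in\mathbb H^2_{BMO}$ because each $\beta^{i,\star}\in\mathbb H^2_{BMO}$, so that $\xi^\star\in\widetilde{\mathcal C}_N$ and the induced effort $\nu^*_t=\nu^\star_t(X_t,Z_t)$ is the one already identified in Proposition \ref{prop:agentpb}.

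The main obstacle I anticipate is not the algebraic bookkeeping above but the admissibility and integrability conditions underlying the verification. Concretely, one must ensure that along the candidate optimizer the BMO property of each $\beta^{i,\star}$, and hence of $Z$, is preserved, that the Dol\'eans-Dade exponential defining $\mathbb P^{\nu^*}$ is a genuine martingale so that $(\nu^*,\mathbb P^{\nu^*})\in\mathcal A$ and Proposition \ref{prop:agentpb} applies, and that the local martingale arising in the verification for each $u^i$ is a true martingale under the relevant measure, so that the verification inequality becomes an equality at $\beta^{i,\star}$. A secondary subtlety is to make precise the sense in which the other Principals' strategies are frozen: the decoupling requires treating $(\alpha^{j,\star},\beta^{j,\star})_{j\neq i}$ as the fixed realized processes of the putative equilibrium rather than as feedback laws reacting to Principal $i$'s deviation, which is legitimate because Definition \ref{def:nash} only tests unilateral deviations of a single Principal.
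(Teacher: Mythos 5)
Your proposal is correct and follows essentially the same route as the paper, which disposes of this theorem in one line by invoking Proposition \ref{prop:Pi} together with the standard verification argument for Markovian Nash equilibria in \cite[Theorems 8.4 and 8.5]{Dockner}; your write-up simply makes explicit the decoupling of the system \eqref{hjbN} into the individual equations \eqref{hjbi}, the role of the equilibrium conditions \eqref{eqsystem}, and the saturation of the participation constraint that the paper leaves to that reference. The integrability and open-loop-versus-feedback caveats you flag are genuine but are likewise left implicit in the paper's own treatment.
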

\begin{proof}
The result follows from the same argument as \cite[Theorems 8.4 and 8.5]{Dockner}, thanks to Proposition \ref{prop:Pi}.
\end{proof}

\subsection{Special case for risk-neutral Principals}\label{sec:specialcase}

The main difficulty on the previous procedure of solving the Principals' problem is to check that Assumption \ref{firstorderN} holds true, \textit{i.e.} there exists a solution to the system \eqref{hjbN} of fully coupled HJB equations. It is obviously a tough question in general cases. In this section, we will show that if the Principals are all risk-neutral and there is no discount factor, i.e.
$$k\equiv 0\q \mbox{and}\q U_{P^i}(\ell_i(x)-y)=\ell_i(x)-y,\q (x,y)\in \mathbb R^N\times \mathbb R,$$
 the system of equations are solvable under reasonable assumptions.

Recall the value function of the $i$th Principal \eqref{pb:prncipaliyi}. Since $k\equiv 0$ and the Principal is risk-neutral, we have
\beaa
u^{i}(0,x;y^i) &:=& \sup_{\beta^i\in  \dbH^2_{BMO}}\mathbb E^{\nu^{\star}}\left[\ell_i(X_T)-Y_T^{y^i,\alpha^i,\beta^i}\right]\\
&=&  \sup_{\beta^i\in   \dbH^2_{BMO}}\mathbb E^{\nu^{\star}}\left[\ell_i(X_T)- y^i -\int_0^T \left(G\left(s,X_s, \bar \beta_s\right)-S_{\alpha_s^{(-i)}}\right) ds-\int_0^T \beta_s^{i}\cdot dX_s\right], \\
&=& - y^i +\sup_{\beta^i\in   \dbH^2_{BMO}}\mathbb E^{\nu^{\star}}\left[\ell_i(X_T)-\int_0^T \left(G\left(s,X_s, \bar \beta_s\right)-S_{\alpha_s^{(-i)}}+ \beta_s^{i}\cdot b\big(s,X_s,\nu^*_s(X_s, \bar\b_s)\big) \right)ds\right],
\eeaa
where $\bar\b : = \sum_{i=1}^N \b^i$. Since we are only interested in finding a particular Nash equilibrium, we may assume 
\beaa
\a^i_s = \frac{G(s,X_s, \bar \beta_s)}{N},
\eeaa
so that the second line of \eqref{firstordercondition} is satisfied and the control problem above is further simplified to be:
\beaa
u^{i}(0,x;y^i) =  - y^i +\sup_{\beta^i\in   \dbH^2_{BMO}}\mathbb E^{\nu^{\star}}\left[\ell_i(X_T)-\int_0^T \left(\frac{G\left(s,X_s, \bar \beta_s\right)}{N}+ \beta_s^{i}\cdot b\big(s,X_s,\nu^*_s(X_s, \bar\b_s)\big) \right)ds\right].
\eeaa
As we can see, the stochastic control problem here has no longer the state variable $Y$ as in the general case. Thus, we choose $y^i$ so that $\sum_{i=1}^N y^i=r_0 $ so that the corresponding HJB equation is semilinear as follow:
\begin{multline}\label{semilinearHJB}
\pa_t u^i + \frac12\text{Tr}( \Si_t\Si_t ^\top \nabla^2_{x}u^i)  \\
+\sup_{\b}\Big\{(\nabla_x  u^i -\b)  \cd b\big(t,x,\nu^*_t(x,\bar\b_t )\big) 
- \frac{G(t,x,\bar\b_t)}{N}\Big\}=0, \q u^i(T,x) =\ell_i(x).
\end{multline}
Further, the first order condition for the supremum in \eqref{semilinearHJB} being reached reads:
\bea
&\beta^{i,\star}_t =\nabla_x u^i(t,x)-\frac{1}{N}\Phi(t,x,\bar\b) &\label{FOC:ith} \\
 &\mbox{with}\q \Phi(t,x,\bar\b):=M_{\bar\beta}^{-1}\left(\nabla_{\bar\beta}G(t,x,\bar\beta_t)+ N b\big(t,x,\nu^\star(t,x,\bar\b_t)\big)\right), &\label{def:Phi}
\eea
where $M_{\bar\beta}:=\nabla_\nu b(t,x,\nu^\star(t,x,\bar\beta_t))\nabla_{z}\nu^\star(t,x,\bar \beta_t)$. In order that \eqref{FOC:ith} makes sense and for the upcoming reasoning, we introduce the following assumption. 
\ms

\begin{assumption}\label{assum:semi}
Assume that 
\begin{itemize}
\item $\nu^*:(t,x,z)\mapsto \nu^*(t,x,z)$ is continuously differentiable in $z$;
\item $M_{\bar\beta}$ is invertible for all $(t,x,\bar\b)$;
\item the function $Id + \Phi(t,x,\cd)$ is invertible, and its inverse function is denoted by $\f(t,x,\cd)$.
\end{itemize}
Further, define
\bea\label{semi-H}
\Hc(t,x,z):=
 \big(z- \f(t,x,z)\big)\cd b\big(t,x,\nu^*(t,x,\f(t,x,z))\big) - G\big(t,x,\f(t,x,z)\big),
\eea
and assume that
\begin{itemize}
\item for some $\g,\a\in (0,1)$, $L\, :\, x \mapsto \sum_{i=1}^N\ell_i(x) \in H_\g(\dbR^N) $ (the H\"older space),  $\Si_t \Si_t^\top \in H_1([0,T])$  and $\Hc\in H_\a(K)$ for any bounded subset of $[0,T]\times \dbR^N\times\dbR^N$;
\item $\Hc (t,x,z) = O(|z|^2)$ for all $(t,x)\in [0,T]\times\dbR^N$.
\end{itemize}
\end{assumption}
\ms

\begin{remark}
The first part of Assumption \ref{assum:semi} is structural in our approach, while the second part is a sufficient condition for the solvability of the PDE we will encounter in the following argument. The readers may find alternative conditions in different contexts.
\end{remark}

\ms
\begin{theorem}\label{thm:semi}
Let Assumption \ref{assum:si b}, \ref{assum:c} and \ref{assum:semi} hold true. Then, the system of equations composed of the equations as \eqref{semilinearHJB}, for all $i=1,\cdots,N$, admits a classical solution.
\end{theorem}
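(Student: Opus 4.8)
The plan is to exploit the symmetric structure of the system in order to \emph{decouple} it, reducing the $N$ coupled equations to a single scalar semilinear equation together with $N$ linear ones. Writing $V:=\sum_{i=1}^N u^i$ and summing the $N$ copies of \eqref{semilinearHJB} evaluated at their maximizers, I would use the first-order relation \eqref{FOC:ith}, which gives $\nabla_x u^i-\beta^{i,\star}=\tfrac1N\Phi(t,x,\bar\beta)$ for every $i$, together with the definition $\bar\beta=\sum_i\beta^{i,\star}$. Summing \eqref{FOC:ith} over $i$ yields $(\mathrm{Id}+\Phi(t,x,\cdot))(\bar\beta)=\nabla_x V$, so by the invertibility hypothesis in Assumption \ref{assum:semi} one has $\bar\beta=\varphi(t,x,\nabla_x V)$. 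Since $\sum_i(\nabla_x u^i-\beta^{i,\star})=\nabla_x V-\bar\beta=\Phi(t,x,\bar\beta)$, the summed equation collapses to the scalar semilinear PDE
\begin{equation*}
\partial_t V+\tfrac12\mathrm{Tr}\big(\Sigma_t\Sigma_t^\top\nabla_x^2 V\big)+\mathcal H\big(t,x,\nabla_x V\big)=0,\qquad V(T,x)=L(x),
\end{equation*}
with $\mathcal H$ exactly the function introduced in \eqref{semi-H} and $L=\sum_i\ell_i$; this is precisely why those two objects were defined.

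The core of the argument, and the step I expect to be the main obstacle, is the solvability of this scalar equation. Because $\mathcal H(t,x,z)=O(|z|^2)$, it is a semilinear parabolic equation with \emph{quadratic growth in the gradient}, so one cannot simply run a Lipschitz-type contraction/fixed-point scheme. After the time reversal $\tau=T-t$, I would first establish a priori $L^\infty$ and interior gradient estimates (a Bernstein-type argument, using that $L$ has controlled growth and that $\Sigma_t\Sigma_t^\top$ is uniformly elliptic and bounded by Assumption \ref{assum:si b}), which truncates the nonlinearity to one with at most linear gradient growth. On the truncated problem the Hölder hypotheses of Assumption \ref{assum:semi} — namely $L\in H_\gamma$, $\Sigma\Sigma^\top\in H_1$ and $\mathcal H\in H_\alpha$ on bounded sets — let one run a Schauder fixed-point scheme, or invoke directly the classical existence theory for quasilinear parabolic equations (of Ladyzhenskaya--Solonnikov--Uraltseva type), to produce a solution $V\in\mathcal C^{1,2}$.

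Once $V$ is in hand, $\bar\beta=\varphi(t,x,\nabla_x V)$ is a known (Hölder) function of $(t,x)$, and so is the source $\tfrac1N\mathcal H(t,x,\nabla_x V)$. Each individual equation then becomes the \emph{linear} terminal-value problem
\begin{equation*}
\partial_t u^i+\tfrac12\mathrm{Tr}\big(\Sigma_t\Sigma_t^\top\nabla_x^2 u^i\big)+\tfrac1N\mathcal H\big(t,x,\nabla_x V\big)=0,\qquad u^i(T,x)=\ell_i(x),
\end{equation*}
because the term $(\nabla_x u^i-\beta^{i,\star})\cdot b=\tfrac1N\Phi(t,x,\bar\beta)\cdot b$ no longer involves $u^i$ once $\bar\beta$ is frozen. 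Each such problem admits a classical solution $u^i\in\mathcal C^{1,2}$ by standard linear parabolic (Schauder, or Feynman--Kac) theory. The final step is a consistency check: summing the $N$ linear equations shows that $\sum_i u^i$ solves exactly the scalar problem satisfied by $V$ with the same terminal data, so by uniqueness $\sum_i u^i=V$, whence $\nabla_x\big(\sum_i u^i\big)=\nabla_x V$ and the relation $\bar\beta=\sum_i\beta^{i,\star}$ used to decouple is genuinely satisfied. This closes the loop and exhibits $(u^i)_{1\le i\le N}$ as a classical solution of the full system.
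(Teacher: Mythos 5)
Your proposal follows essentially the same route as the paper: aggregate the first-order conditions to obtain $\bar\beta=\varphi(\cdot,\nabla_x V)$ and the scalar semilinear equation \eqref{PDE:aggreg:proof} for $V=\sum_i u^i$, solve it by classical parabolic theory under the H\"older and quadratic-growth hypotheses of Assumption \ref{assum:semi}, then define each $u^i$ through the resulting linear equation with source $\tfrac1N\mathcal H(\cdot,\nabla_x V)$ and close the loop by checking $\sum_i u^i=V$ (the paper does this via the Feynman--Kac representations, which is the same uniqueness argument for the linear problem). The argument is correct and matches the paper's proof.
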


\begin{remark}
The main idea of proving the result above is to study the `aggregated' equation. Let us do the following intuitive analysis.
Summing up the equations \eqref{FOC:ith} for all $1\le i\le N$, we formally have
\beaa
\bar\b = \nabla_x V-\Phi(\cd,\bar\b),
\eeaa
where $V =\sum_{i=1}^N u^i$, and thus $\bar\b = \f (\cd,\nabla_x V)$.
Using the optimal $\b^{i,\star}$ in \eqref{FOC:ith},  we can rewrite \eqref{semilinearHJB} for all $1\le i\le N$ as
\begin{multline}\label{eq:uimod}
\pa_t u^i + \frac12\text{Tr}( \Si_t\Si_t ^\top \nabla^2_{x}u^i)  \\
+(\nabla_x  u^i -\b^{i,\star}_t)  \cd b\big(t,x,\nu^*_t(x,\bar\b_t )\big) 
- \frac{G(t,x,\bar\b_t)}{N}=0, \q u^i(T,x) =\ell_i(x).
\end{multline}
Sum up the $N$ equations, and we obtain the so-called `aggregated' equation:
\begin{equation}\label{PDE:aggreg:proof}
\begin{cases}
\displaystyle \partial_t V+\frac12 \text{Tr}\left(\Sigma_t\Sigma^\top_t \nabla^2_xV\right)+ \Hc(\cd, \nabla_x V)=0,\; (t,x)\in [0,T)\times \mathbb R^N ,\\
\displaystyle V(T,x)= L(x),
\end{cases} 
\end{equation}
where $\Hc$ is defined in \eqref{semi-H}.
Under Assumption \ref{assum:semi}, the equation above admits a solution $V\in H^{(-1-\g)}_{2+\a}$ (see e.g. Theorem 12.16, pp. 315, \cite{Lieberman}). Note that \eqref{PDE:aggreg:proof} does not necessarily have an unique solution, but it does not bother the following argument.
\end{remark}

\ms
\noindent{\it Proof of Theorem \ref{thm:semi}}\q
In this proof, we shall construct a solution to the system of equations, using one solution $V$ to \eqref{PDE:aggreg:proof}. Note that we have the following representation for $V$:
\bea\label{Vrep}
V(t,x) = \dbE^\dbP \Big[L(X_T) +\int_t^T \Hc\big(s,X_s, \nabla_x V(s,X_s)\big)ds \big| X_t =x\Big].
\eea
Now define for all $1\le i\le N$:
\beaa
\pa_t \tilde u^i + \frac12\text{Tr}( \Si_t\Si_t ^\top \nabla^2_{x} \tilde u^i) +\frac{1}{N}\Hc(\cd,\nabla_x V)=0, \q \tilde u^i(T,x) =\ell_i(x).
\eeaa
The equation above is a heat equation, so it admits a classical solution and 
\beaa
\tilde u^i(t,x) = \dbE^\dbP \Big[\ell_i(X_T) +\int_t^T \frac{1}{N}\Hc(s,X_s, \nabla_x V(s,X_s)) ds \big| X_t =x\Big].
\eeaa
Together with \eqref{Vrep}, it is clear that $V = \sum_{i=1}^N \tilde u^i$ and thus 
\bea\label{Vutilde}
\nabla_x V =  \sum_{i=1}^N \nabla_x \tilde u^i.
\eea
Now define 
\bea\label{veri:FOC}
\bar\b := \f(\cd, \nabla_x V),\q\q \b^i := \nabla_x \tilde u^i - \frac{1}{N}\Phi(\cd, \bar\b).
\eea
It follows from \eqref{Vutilde} that $\bar\b = \sum_{i=1}^N \b^i$. Also note that
\beaa
\frac{1}{N}\Hc(\cd,\nabla_x V) &=& \frac{1}{N}\Phi(\cd, \f(\cd, \nabla_x V))  \cd b\big(\cd,\nu^*(\cd,\f(\cd, \nabla_x V))\big) 
- \frac{G(t,x,\f(\cd, \nabla_x V))}{N} \\
& = & \big(\nabla_x \tilde u^i -\b^i \big)  \cd b\big(\cd,\nu^*(\cd,\bar\b)\big) - \frac{G(t,x,\bar\b)}{N}.
\eeaa
Therefore $\tilde u^i$ is a solution to \eqref{eq:uimod}. Together with \eqref{veri:FOC}, we conclude that $\tilde u^i$ is a classical solution to \eqref{semilinearHJB}.
\qed

\section{Application to a model with two correlated Principals with appetence parameters}\label{section:2principaux} 

\subsection{The bi-Principals model}
Let $\nu:= (\nu^1, \nu^2)^\top$ an admissible effort. Let $W$ be a $2$-dimensional Brownian motion and $\Sigma\in \mathbb R^{2,2}$ be an invertible matrix. We assume that
\begin{itemize}
\item there is no discount factor, \textit{i.e.} $k=0$.

\item both Principals are risk-neutral, that is, $U_{P^1}=U_{P^2}=I$, the identity function.

\item the drift $b$ is a linear function of the effort such that $b(t,x,\nu):= K\nu$, where $K$ is a diagonal matrix with coefficients $k_1,k_2$ on the diagonal which represents the efficiency of the Agent with project $1$ and $2$. 

\item the map $c$ is the classical quadratic cost function defined by $c(t,x,\nu)=\frac{\|\nu_t\|^2}2$.

\item $\ell_i(x):=(1+\gamma_i)x_i-\gamma_i x_j,\; 1\leq i\neq j\leq 2$ with appetence parameters $\gamma_1,\gamma_2 \in [0,1]$, and denote $\Gamma:= (1+\gamma_1-\gamma_2, 1+\gamma_2-\gamma_1)^\top$.
\end{itemize}
Clearly, this model is an example of the special case we discussed in Section \ref{sec:specialcase}.

 It follows from \eqref{vstar} that we have the following relation between the optimal effort of the Agent and the volatility coefficients of the contracts:
\beaa
\nu^\star= K(\beta^1+\beta^2).
\eeaa

Further, it follows from \eqref{pb:prncipaliyi} and \eqref{Ydynamic} that the value functions read:
\begin{multline*}
U_0^{P_i}(x) 
= \sup_{\beta^i}\mathbb E^{\nu^{\star}}\Big[\ell(X_T)- \int_0^T \Big( \frac{R_A}2\|\Sigma^\top (\beta^{1}_s+\beta^{2}_s) \|^2-\frac{\| K(\beta^{1}_s+\beta^{2}_s)\|^2}2 -S_{\alpha_s^{(-i)}} + K^2(\beta^1_s+\beta^2_s)\cdot \beta^{i}_s \Big) ds \Big].
\end{multline*}

We have the following corollary from Theorem \ref{thm:semi}.
\ms

\begin{corollary}\label{lemmaBiprincipaux} There exist $(\alpha^{1,\star},\beta^{1,\star})$ and $(\alpha^{2,\star},\beta^{2,\star})$ such that  the following system of PDEs admits a  solution $(v^1,v^2)$ in $\mathcal C^{1,2}([0,T]\times \mathbb R^2)$
 \begin{equation}\label{HJB21}
\begin{cases}
\displaystyle -\partial_t v^1(t,x)-\left\{\nabla_x v^1\cdot K^2(\beta^{1,\star}_t+\beta^{2,\star}_t) +\frac{1}2\text{Tr}\left( \Sigma\Sigma^\top\nabla^2_x v^1\right)- \frac{R_A}2\|\Sigma^\top (\beta^{1,\star}_t+\beta^{2,\star}_t) \|^2\right.\\
\displaystyle \hspace{9em}\left. +\frac{\| K(\beta^{1,\star}_t+\beta^{2,\star}_t)\|^2}2+\alpha_t^{2,\star}-K^2(\beta^{1,\star}_t+\beta^{2,\star}_t)\cdot \beta_t^{1,\star}\right\}=0\\
\displaystyle v^1(T,x)=(1+\gamma_1)x_1-\gamma_1 x_2,
\end{cases} 
\end{equation}
and 
\begin{equation}\label{HJB22}
\begin{cases}
\displaystyle -\partial_t v^2(t,x)-\left\{\nabla_x v^2\cdot K^2(\beta^{1,\star}_t+\beta^{2,\star}_t) +\frac{1}2\text{Tr}\left( \Sigma\Sigma^\top \nabla^2_x v^2 \right)- \frac{R_A}2\|\Sigma^\top (\beta^{1,\star}_t+\beta^{2,\star}_t)\|^2\right.\\
\displaystyle \hspace{9em}\left. +\frac{\| K(\beta^{1,\star}_t+\beta^{2,\star}_t)\|^2}2+\alpha_t^{1,\star}-K^2(\beta^{1,\star}_t+\beta^{2,\star}_t)\cdot \beta_t^{2,\star}\right\}=0\\
\displaystyle v^2(T,x)=(1+\gamma_2)x_2-\gamma_2 x_1,
\end{cases} 
\end{equation}
and that the first order conditions \eqref{firstordercondition} hold true, that is,
\begin{equation}\label{b1b2}
\begin{cases}
\beta^{1,\star} = \nabla_x v^1 - K^{-2} R_A\Sigma\Sigma^\top(\beta^{1,\star}+\beta^{2,\star})\\
\beta^{2,\star} = \nabla_x v^2 - K^{-2} R_A\Sigma\Sigma^\top(\b^{1,\star} + \beta^{2,\star})\\
\alpha^{1,\star}+\alpha^{2,\star} = \frac{R_A}{2}\|\Sigma^\top (\beta^{1,\star}_t+\beta^{2,\star}_t)\|^2 - \frac{K^2\|\beta^{1,\star}_t+\beta^{2,\star}_t\|^2}{2}.
\end{cases} 
\end{equation}
\end{corollary}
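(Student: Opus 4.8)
The plan is to verify that the bi-Principals model is a special case covered by Theorem \ref{thm:semi}, and then to make the generic objects of Section \ref{sec:specialcase} explicit so as to identify \eqref{HJB21}--\eqref{HJB22} with the system \eqref{semilinearHJB} for $N=2$ and \eqref{b1b2} with the associated first-order conditions. Since $b(t,x,\nu)=K\nu$ and $c(t,x,\nu)=\frac12\No{\nu}^2$, Assumptions \ref{assum:si b} and \ref{assum:c} are immediate: $K$ has constant bounded gradient, the quadratic cost is convex with exponents $\underline m=m=1$, and $\Si$ is constant and invertible. Thus the real content is to check Assumption \ref{assum:semi} and then to run the aggregation argument underlying Theorem \ref{thm:semi}.

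First I would compute the model-dependent data. Maximising $\nu\mapsto K\nu\cd z-\frac12\No{\nu}^2$ gives, through \eqref{vstar}, the linear feedback $\nu^\star(t,x,z)=Kz$, whence $G(s,x,z)=\frac{R_A}2\No{\Si^\top z}^2-\frac12\No{Kz}^2$, which are precisely the quadratic terms appearing (as $-G$) in \eqref{HJB21}--\eqref{HJB22}. Then $M_{\bar\b}=\nabla_\nu b\,\nabla_z\nu^\star=K\cd K=K^2$ is invertible, and \eqref{def:Phi} yields $\Phi(t,x,\bar\b)=\big(Id+K^{-2}R_A\Si\Si^\top\big)\bar\b$, so that $Id+\Phi$ is the constant linear map $\bar\b\mapsto\big(2\,Id+K^{-2}R_A\Si\Si^\top\big)\bar\b$. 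As $K^{-2}R_A\Si\Si^\top$ is conjugate to the symmetric positive-definite matrix $R_A(K^{-1}\Si)(K^{-1}\Si)^\top$, its eigenvalues are positive, hence $Id+\Phi$ is invertible and $\f=(Id+\Phi)^{-1}$ is well defined and linear. Consequently $\Hc$ from \eqref{semi-H} is a quadratic form in $z$ with constant coefficients, so $\Hc(t,x,z)=O(|z|^2)$ and is smooth; together with $L(x)=\sum_i\ell_i(x)=\Gamma\cd x$ linear (hence H\"older) and $\Si\Si^\top$ constant, all remaining requirements of Assumption \ref{assum:semi} hold.

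With the hypotheses in force I would reproduce the proof of Theorem \ref{thm:semi}. Summing \eqref{HJB21} and \eqref{HJB22} and imposing $\a^{1,\star}+\a^{2,\star}=G(\cd,\bar\b)$ (the third line of \eqref{b1b2}) collapses the two running costs into a single quadratic expression in $\bar\b=\f(\cd,\nabla_x V)$, which is linear in $\nabla_x V$ for $V:=v^1+v^2$; this produces an aggregated equation of the form \eqref{PDE:aggreg:proof} with Hamiltonian $O(|\nabla_x V|^2)$ and terminal datum $\Gamma\cd x$, to which \cite{Lieberman} supplies a classical solution $V$. To recover the components I would substitute the feedbacks \eqref{b1b2} back into \eqref{HJB21}--\eqref{HJB22}: the coupling $\nabla_x v^i\cd K^2\bar\b$ cancels the matching term inside $-K^2\bar\b\cd\beta^{i,\star}$, so each $v^i$ solves a linear heat equation whose source depends only on the now-known $\bar\b$ (with the split fixed by $\a^{i,\star}=\frac12 G(\cd,\bar\b)$); these have classical solutions summing to $V$, and \eqref{b1b2} holds by construction. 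The main obstacle is the verification of Assumption \ref{assum:semi}, above all the invertibility of $Id+\Phi$ (equivalently, the solvability of the linear system \eqref{b1b2} for $(\beta^{1,\star},\beta^{2,\star})$) and the H\"older regularity of $\Hc$ required by the quasilinear existence theory; the recovery of the individual $v^i$ from $V$ is then routine.
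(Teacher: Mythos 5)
Your overall strategy is the paper's own: the corollary is obtained by specialising Theorem \ref{thm:semi}, and your verification of Assumptions \ref{assum:si b}, \ref{assum:c} and \ref{assum:semi} (linear drift, quadratic cost, $\nu^\star(t,x,z)=Kz$, $G(t,x,z)=\frac{R_A}{2}\|\Sigma^\top z\|^2-\frac12\|Kz\|^2$, $M_{\bar\b}=K^2$, a constant-coefficient quadratic $\Hc$, linear terminal data), as well as the aggregation argument and the recovery of the individual $v^i$ as solutions of heat equations, all match what the paper does.

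However, there is a genuine gap at the point where you claim that \eqref{b1b2} ``holds by construction''. Applying the literal formula \eqref{def:Phi} you obtain $\Phi(\bar\b)=(Id+K^{-2}R_A\Si\Si^\top)\bar\b$, hence via \eqref{FOC:ith} the first-order condition $\b^{i,\star}=\nabla_x v^i-\frac12\bar\b-\frac12K^{-2}R_A\Si\Si^\top\bar\b$. This is \emph{not} the condition \eqref{b1b2} asserted in the corollary, which reads $\b^{i,\star}=\nabla_x v^i-K^{-2}R_A\Si\Si^\top\bar\b$ and corresponds to $\Phi(\bar\b)=2K^{-2}R_A\Si\Si^\top\bar\b$ --- the value the paper itself uses immediately after the corollary, yielding $M=(Id+2K^{-2}R_A\Si\Si^\top)^{-1}$ rather than your $(2Id+K^{-2}R_A\Si\Si^\top)^{-1}$. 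The discrepancy is not innocent: the two choices give different $\bar\b$, hence a different $\nu^\star$, and would alter Proposition \ref{prop:bernheim}. Its source is that \eqref{b1b2} is the first-order condition of \eqref{HJB21}, in which Principal $1$ optimises over $\b^1$ with $\alpha^{2,\star}$ held \emph{fixed}, so that the full term $-G(\bar\b)$ (and not $-G(\bar\b)/N$) is differentiated; one then finds $\b^{i,\star}=\nabla_x v^i-M_{\bar\b}^{-1}\big(\nabla_{\bar\b}G+b\big)=\nabla_x v^i-K^{-2}R_A\Si\Si^\top\bar\b$, which is exactly \eqref{b1b2}. By contrast, \eqref{def:Phi} is derived from \eqref{semilinearHJB} after the substitution $\alpha^i=G(\bar\b)/N$ has been inserted into the running cost, which changes the dependence on $\b^i$. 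To prove the corollary as stated you must derive the first-order condition directly from \eqref{HJB21}--\eqref{HJB22} and then run your aggregation and heat-equation argument with $\f=(Id+2K^{-2}R_A\Si\Si^\top)^{-1}$; as written, your construction produces a solution of a different algebraic system and therefore does not establish \eqref{b1b2}.
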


In the rest of this section, we are going to compute the explicit solutions to the system of equations \eqref{HJB21}, \eqref{HJB22}. We learn from \eqref{b1b2} that the function $\Phi$ as \eqref{def:Phi} in this case is:
\beaa
\Phi(\b) = 2K^{-2} R_A\Sigma\Sigma^\top \b.
\eeaa
Further, we have
\beaa
\f(\b) = (Id + \Phi)^{-1}(\b) = \big(I + 2K^{-2} R_A\Sigma\Sigma^\top\big)^{-1} \b =: M\b
\eeaa
Therefore, the `aggregated equation' in this case reads:
\begin{equation}\label{HJBsomme}
\begin{cases}
\displaystyle -\partial_t V(t,x) -\frac{1}2\text{Tr}\left(\Sigma\Sigma^\top \nabla^2_x V\right) - \nabla_x V \cdot K^2 M\nabla_x V+ \frac{R_A}2\|\Sigma^\top M \nabla_x V\|^2  + \frac{\| KM \nabla_x V\|^2}2=0\\
\displaystyle V(T,x)=x\cdot \Gamma,
\end{cases} 
\end{equation}
\noindent It is easy to observe that PDE \eqref{HJBsomme} has the unique smooth solution 
\bea\label{expsol}
&V(t,x)=x\cdot \Gamma +\lambda (T-t),&\\
&\mbox{with}\quad \lambda = \Gamma\cdot K^2M\Gamma - \frac{R_A}{2}\|\Sigma^\top M\Gamma\|^2 -\frac{\|KM\Gamma\|^2}{2}.&\notag
\eea
 In particular, $\nabla_x V = \Gamma$, and thus
\bea\label{nuoptimal}
\beta^{1,\star}+ \beta^{2,\star}  = \f(\nabla_x V)= M\Gamma \q
\mbox{and}\q \nu^*=KM\G.
\eea
Together with the first order condition \eqref{b1b2}, we  have
\begin{equation}\label{b1b2new}
\begin{cases}
\beta^{1,\star} = \nabla_x v^1 - K^{-2} R_A\Sigma\Sigma^\top M\Gamma,\\
\beta^{2,\star} =  \nabla_x v^2 - K^{-2} R_A\Sigma\Sigma^\top M\Gamma.\\
\end{cases} 
\end{equation}
Therefore, assuming $\a^{1,\star} = \a^{2,\star}$, we know that the solution to the equation \eqref{HJB21} satisfies:
\beaa
\begin{cases}
\displaystyle -\partial_t v^1(t,x)-\frac{1}2\text{Tr}\left( \Sigma\Sigma^\top\nabla^2_x v^1\right) 
- \frac12 \G \cdot K^2 M\G+ \frac{R_A}4\|\Sigma^\top M \G\|^2  + \frac{\| KM \G\|^2}4 =0\\
\displaystyle v^1(T,x)=(1+\gamma_1)x_1-\gamma_1 x_2,
\end{cases} 
\eeaa
It is easy to compute the explicit solution to the equation above:
\beaa
& v^1(t,x):= \tilde\lambda (T-t) + (1+\gamma_1,-\gamma_1)x &\\
& \mbox{with}\q \tilde \lambda : = \Gamma \cdot K^2 M\Gamma - \frac{5R_A}{4}\|\Sigma^\top M\Gamma\|^2 -\frac{3}{4}\|KM\Gamma\|^2.  &
\eeaa
Similarly, we compute the solution to the equation \eqref{HJB22} as
\beaa
v^2(t,x):= \tilde\lambda (T-t) + (-\gamma_2, 1+\gamma_2)x.
\eeaa


\subsection{Comparison with the model with the aggregated offer}\label{section:casPrincipauxFullyCooperatif} 

In this section, we compare the competitive common agency example studied in the previous section with the model in which the Principals can be aggregated, that is, the problem can be reduced to one single Principal-Agent model  as mentioned at the end of Section \ref{soussection:principalequilibrium:intro}. In this case, the Agent problem can be solved by considering the same class of contracts, that is, there exists $(Y_0,Z)\in [R_0,+\infty)\times \mathbb H^2_{BMO}(\mathbb R^2)$ such that
\begin{align*}
\xi&= Y_0+\int_0^T Z_s \cdot \Sigma dW_s+\int_0^T\left(\frac{R_A}2\|\Sigma^\top Z_s\|^2+\frac{\|KZ_s\|^2}2-KZ_s\cdot Z_s \right)ds\\
&=Y_0+\int_0^T Z_s \cdot \Sigma dW^\star_s+\int_0^T \left(\frac{R_A}2\|\Sigma^\top Z_s\|^2 +\frac{\|KZ_s\|^2}2 \right) ds,
\end{align*}
where $W^\star$ is a $\mathbb P^{\nu^\star}$-Brownian motion.
The Principal manages to do the following optimization:
\begin{align*}
U_0&=\sup_Z \mathbb E^\star\left[ (1+\gamma_1-\gamma_2)X_T^1+ (1+\gamma_2-\gamma_1)X_T^2-\xi\right]
\end{align*}
As before, we can calculate the optimal control:
 $$Z^\star=M_a\Gamma,\quad\mbox{with}\quad M_a:= \left(R_A\Sigma\Sigma^\top
+K^2\right)^{-1}K^2.$$
Further, the optimal effort of the Agent should be
$$\nu_{Pf}^\star=K M_a\Gamma. $$
By comparing $\nu_{Pf}^\star$ and  $\nu^\star$ in \eqref{nuoptimal}, we immediately have the following conclusion.
\vspace{0.5em}

\begin{proposition}\label{prop:bernheim}
The effort of the common Agent in the competitive model coincides with the effort of the common Agent in the aggregated model if and only if the Agent is risk-neutral, that is, $R_A=0$.
\end{proposition}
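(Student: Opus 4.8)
The plan is to reduce the claimed equivalence to a short linear-algebra computation. Note first that $\Sigma\Sigma^\top$ is symmetric positive definite, hence invertible, because $\Sigma$ is invertible. Since the competitive effort is $\nu^\star = KM\Gamma$ and the aggregated effort is $\nu_{Pf}^\star = KM_a\Gamma$, and $K$ is diagonal with nonzero entries $k_1,k_2$ (hence invertible), the two efforts coincide if and only if $M\Gamma = M_a\Gamma$. My first step is therefore to put $M$ into a form directly comparable with $M_a$: using $I + 2K^{-2}R_A\Sigma\Sigma^\top = K^{-2}(K^2 + 2R_A\Sigma\Sigma^\top)$ I obtain $M = (K^2 + 2R_A\Sigma\Sigma^\top)^{-1}K^2$, whereas by definition $M_a = (K^2 + R_A\Sigma\Sigma^\top)^{-1}K^2$. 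Both inverses exist, since $K^2 + R_A\Sigma\Sigma^\top$ and $K^2 + 2R_A\Sigma\Sigma^\top$ are positive definite.

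The \emph{if} direction is then immediate: when $R_A = 0$ both matrices reduce to $(K^2)^{-1}K^2 = I$, so $M\Gamma = \Gamma = M_a\Gamma$.

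For the \emph{only if} direction I would argue by contraposition. Assume $M\Gamma = M_a\Gamma$ and set $u := M_a\Gamma = (K^2 + R_A\Sigma\Sigma^\top)^{-1}K^2\Gamma$, so that $K^2\Gamma = (K^2 + R_A\Sigma\Sigma^\top)u$. The hypothesis also reads $K^2\Gamma = (K^2 + 2R_A\Sigma\Sigma^\top)u$, and subtracting these two expressions for $K^2\Gamma$ yields $R_A\,\Sigma\Sigma^\top u = 0$. Since $\Sigma\Sigma^\top$ is invertible, this forces $R_A u = 0$; if $R_A > 0$ we conclude $u = 0$, whence $K^2\Gamma = 0$ and thus $\Gamma = 0$. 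But $\Gamma = (1+\gamma_1-\gamma_2,\, 1+\gamma_2-\gamma_1)^\top$ has components summing to $2$, so $\Gamma \neq 0$ — a contradiction. Hence $M\Gamma = M_a\Gamma$ forces $R_A = 0$.

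I expect no serious obstacle: the argument is elementary once $M$ has been rewritten with the common right factor $K^2$. The only points deserving care are recording that $\Sigma\Sigma^\top$ is invertible — which makes $M$ and $M_a$ well defined and drives the final cancellation — and observing that $\Gamma \neq 0$ for all admissible appetence parameters $\gamma_1,\gamma_2 \in [0,1]$. These two facts are precisely what rule out a spurious coincidence of the efforts when $R_A \neq 0$.
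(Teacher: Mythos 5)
Your proof is correct and follows the same route as the paper, which simply compares $\nu^\star=KM\Gamma$ with $\nu_{Pf}^\star=KM_a\Gamma$ and asserts the conclusion ``immediately.'' You supply the detail the paper leaves implicit: rewriting $M=(K^2+2R_A\Sigma\Sigma^\top)^{-1}K^2$ to match $M_a=(K^2+R_A\Sigma\Sigma^\top)^{-1}K^2$ and using $\Gamma\neq 0$ to rule out coincidence when $R_A>0$ is exactly the right way to make the ``only if'' direction rigorous.
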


\paragraph{Link with the results of \cite{Bernheim} for the discrete-case model.}
Recall that if the Agent is risk-neutral, it is well-known that in the single Principal-Agent problem, the effort in the first best case coincides with that in the second best case (see for instance \cite[Proposition 4.1]{laffont2009theory}). Indeed, as an extension of it in our particular model, if one computes the first best effort, one has to solve
$$U_0^{FB}=\sup_{\nu,\; \xi} \mathbb E^\nu\left[ X_T\cdot \Gamma -\xi\cdot \mathbf 1_2 -\rho e^{-R_A(\xi\cdot \mathbf 1_2 -\int_0^T \frac{\|\nu_s\|^2}2 ds)}\right], $$ with a Lagrange multiplier $\rho>0$ ensuring that the Agent receives his utility reservation $R_0<0$. In this case, by using G\^ateaux derivative to characterize the optimal $\xi$ (see for instance the method used in \cite{EliePossamai}) one gets after an easy computation the following optimizers
$$\nu^{\star,FB}_t:=K\Gamma, $$
$$\xi^\star\cdot \mathbf 1_2=K\Gamma-\frac{1}{R_A}\log\left(-R_0 \right). $$
Therefore, we have proved that the 'second best' effort of a risk-neutral Agent in the competitive model coincides with the 'first best' effort. This is exactly an extension of the result in \cite{Bernheim} in the discrete-case. 

\paragraph{Comparison of optimal remunerations.}
In the common agency model we have seen that the remuneration given to the Agent is
$$\xi^\star_1+ \xi^\star_2=R_0+T\underbrace{\left( \frac{R_A}2 \|  \Sigma^\top M \Gamma \|^2+ \frac{\| KM \Gamma\|^2}2\right)}_{=:\delta}+\int_0^T M\Gamma\cdot \Sigma dW_s^\star. $$
In the aggregated model, we recall that
$$\xi^\star:=R_0+T\underbrace{\left( \frac{R_A}2 \|  \Sigma^\top M_a \Gamma \|^2+ \frac{\| KM_a \Gamma\|^2}2\right)}_{=:\delta_a}+ \int_0^T M_a\Gamma\cdot \Sigma dW_s^\star $$

\noindent Let $\Sigma:=I_2$, then, we get 
$$M=\begin{pmatrix} \frac{k_1^2}{2R_A+k_1^2}&0 \\ 0& \frac{k_2^2}{2R_A+k_2^2} \end{pmatrix} $$ 
and
$$M_a=\begin{pmatrix} \frac{k_1^2}{R_A+k_1^2}&0 \\ 0& \frac{k_2^2}{R_A+k_2^2} \end{pmatrix} $$

\noindent In this particular model, we have $\delta_a\geq \delta$, \textit{i.e.} the non-risk part of the remuneration is higher for the Agent if he is employed by the aggregated firm (the parent firm), comparing to the case he is hired by two different firms. This is curious, because intuitively the Agent should receive higher return if the firms compete with each other. However, one must note that the Agent works more ($\|\nu_{Pf}^\star\|_1\ge \|\nu^\star\|_1$) in the aggregated model which could explained this effect.

%

\subsection{Impact of the appetence, efficiency and correlation parameters}\label{section:impactEffortAmbitions}

Let $\rho\in [-1,1]$ be a correlation parameter such that $$\Sigma:=\begin{pmatrix} 1&0\\ \rho&\sqrt{1-\rho^2}\end{pmatrix}, \; K:=\begin{pmatrix}
k_1&0\\
0&k_2
\end{pmatrix}
$$
In this case, we get after a (tedious but easy) computation
\begin{align*}
\nu^{1,\star}&= \frac{2R_Ak_1\left((1+\gamma_2-\gamma_1)k_2^2\rho -(1+\gamma_1-\gamma_2)k_1^2\right)-k_1^3k_2^2(1+\gamma_1-\gamma_2)}{2R_A^2(\rho^2-1)-2R_A(k_1^2+k_2^2)-k_1^2k_2^2}\\
\nu^{2,\star}&=  \frac{2R_Ak_2\left((1+\gamma_1-\gamma_2)k_1^2\rho -(1+\gamma_2-\gamma_1)k_2^2\right)-k_1^2k_2^3(1+\gamma_2-\gamma_1)}{2R_A^2(\rho^2-1)-2R_A(k_1^2+k_2^2)-k_1^2k_2^2}\\
\end{align*}
\paragraph{Impact of the correlation and the appetence parameters.}
Assume that $k:=k^1=k^2$.
In this case
\begin{align*}
\nu^{1,\star}(\rho)&:= \frac{2R_Ak^3\left((1+\gamma_2-\gamma_1)\rho -(1+\gamma_1-\gamma_2)\right)-k^5(1+\gamma_1-\gamma_2)}{2R_A^2(\rho^2-1)-4R_Ak^2-k^4}\\
\nu^{2,\star}(\rho)&:=  \frac{2R_Ak^3\left((1+\gamma_1-\gamma_2)\rho -(1+\gamma_2-\gamma_1)\right)-k^5(1+\gamma_2-\gamma_1)}{2R_A^2(\rho^2-1)-4R_Ak^2-k^4}\\
\end{align*}
Note that 
\begin{align*}
\nu^{1,\star}(\rho)-\nu^{2,\star}(\rho)&= \frac{4R_Ak^3\left((\gamma_2-\gamma_1)(\rho+1) \right)-2k^5(\gamma_1-\gamma_2)}{2R_A^2(\rho^2-1)-4R_Ak^2-k^4}\\
&=(\gamma_1-\gamma_2)  \frac{-4R_Ak^3(\rho+1)-2k^5}{2R_A^2(\rho^2-1)-4R_Ak^2-k^4}.
\end{align*}
Thus as soon as $\gamma_1>\gamma_2$, we get $\nu^{1,\star}(\rho)>\nu^{2,\star}(\rho)$ so that the Agent works more for the more ambitious principal. Moreover, we have 
$$d(\rho):=\frac{\nu^{1,\star}}{\overline{\nu^{\star}}}(\rho) - \frac{\nu^{2,\star}}{\overline{\nu^{\star}}}(\rho)
=(\gamma_1-\gamma_2)\frac{2R_Ak^3(1+\rho)+k^5}{2R_Ak^3(1-\rho)+k^5},
$$
with $\overline{\nu^{\star}}:= \nu^{1,\star}+ \nu^{2,\star}$. 

\noindent By noticing that the function $\rho\in [-1,1)\longmapsto d(\rho)$ is convex and increasing, we deduce that the difference between the proportion of effort given by the Agent with the more ambitious principal and the proportion of effort given by the Agent with the less ambitious principal increases with the parameter $\rho$. In other words, the more the projects are correlated, the more the proportion of effort given by the Agent is impacted by the difference of the ambition parameters of each Principal. Besides, the convexity shows that the sensibility of the proportion of work given by the Agent for each principal increases with the correlation parameter. The more the projects are correlated, the more a little variation of the correlation has a big effect on the difference of proportion of effort given by the Agent.
\vspace{0.5em} We package these economical results in the following proposition

\begin{proposition}\label{prop:faitstylized} The following stylized facts has been proved
\begin{itemize}
\item The non-risk part of the remuneration is higher for the Agent if he is employed by the aggregated firm (the parent firm), comparing to the case he is hired by two different firms.
\item Trivially, by considering equals performance parameters $k^i$, the Agent works more for the more ambitious Principal.
\item The more the projects are correlated, the more the proportion of effort given by the Agent is impacted by the difference of the ambition parameters of each Principal. Moreover, the sensibility of the proportion of work given by the Agent for each principal increases with the correlation parameter.
\end{itemize}
\end{proposition}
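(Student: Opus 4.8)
The statement packages three quantitative facts, each of which is closed by a short sign argument applied to the explicit formulas derived above; the plan is therefore to treat the three items in order.

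\textbf{First item} ($\delta_a \ge \delta$). Here I would exploit that, for $\Sigma = I_2$, all matrices in play are diagonal, so the expressions for $\delta$ and $\delta_a$ split componentwise. Writing $\Gamma = (\Gamma_1,\Gamma_2)^\top$, $m_i := k_i^2/(2R_A + k_i^2)$ and $m_{a,i} := k_i^2/(R_A + k_i^2)$, a one-line expansion gives $\delta = \sum_{i=1}^2 \frac12(R_A + k_i^2)\, m_i^2 \Gamma_i^2$ together with the same expression for $\delta_a$ with $m_i$ replaced by $m_{a,i}$. Since $R_A > 0$ forces $2R_A + k_i^2 > R_A + k_i^2 > 0$, we get $0 \le m_i \le m_{a,i}$, hence $m_i^2 \le m_{a,i}^2$, and summing the nonnegative terms yields $\delta \le \delta_a$. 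The accompanying observation that the Agent works more in the aggregated model is the very same inequality $m_{a,i}\ge m_i$ applied componentwise to $\nu^\star = KM\Gamma$ of \eqref{nuoptimal} and $\nu^\star_{Pf} = KM_a\Gamma$, which gives $\|\nu^\star_{Pf}\|_1 \ge \|\nu^\star\|_1$.

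\textbf{Second item.} Starting from the closed form $\nu^{1,\star} - \nu^{2,\star} = (\gamma_1 - \gamma_2)\,\frac{-4R_A k^3(\rho+1) - 2k^5}{2R_A^2(\rho^2-1) - 4R_A k^2 - k^4}$ obtained for $k_1 = k_2 = k$, I would argue purely by signs. The numerator is strictly negative on $[-1,1]$ because $\rho + 1 \ge 0$ and all coefficients are positive; the denominator is strictly negative because $\rho^2 - 1 \le 0$ makes $2R_A^2(\rho^2-1) \le 0$ while $-4R_A k^2 - k^4 < 0$. The quotient is therefore positive, and multiplying by $(\gamma_1 - \gamma_2) > 0$ gives $\nu^{1,\star} > \nu^{2,\star}$, i.e. the Agent devotes more effort to the more ambitious Principal.

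\textbf{Third item.} Setting $a := 2R_A k^3 > 0$ and $c := a + k^5$, I would write $d = (\gamma_1 - \gamma_2)\, f$ with $f(\rho) = (a\rho + c)/(c - a\rho)$. Two differentiations give $f'(\rho) = 2ac/(c - a\rho)^2$ and $f''(\rho) = 4a^2 c/(c - a\rho)^3$. On the interval $[-1,1)$ the denominator $c - a\rho = a(1-\rho) + k^5$ is strictly positive, so both $f' > 0$ and $f'' > 0$; taking $\gamma_1 > \gamma_2$ so that the prefactor is positive, $d$ is increasing and convex, which is exactly the asserted sensitivity of the effort-proportion gap to the correlation $\rho$. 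The computations are elementary, so there is no genuine obstacle: the only points demanding care are the sign bookkeeping in the second item (both numerator and denominator are negative, and it is their quotient that must come out positive) and, in the third item, checking that $c - a\rho$ stays strictly positive on the half-open interval $[-1,1)$ — it is precisely the exclusion of $\rho = 1$ that keeps $f'$ and $f''$ finite and of constant sign.
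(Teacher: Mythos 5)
Your proof is correct and follows essentially the same route as the paper, which establishes each of the three facts by the explicit componentwise formulas for $\delta$, $\delta_a$, $\nu^{i,\star}$ and $d(\rho)$ and then reads off the signs; you merely make the elementary sign and convexity checks explicit. One small quibble: the positivity of $c-a\rho=2R_Ak^3(1-\rho)+k^5\ge k^5>0$ holds on all of $[-1,1]$, so the exclusion of $\rho=1$ is not what keeps $f'$ and $f''$ finite (it stems rather from the invertibility of $\Sigma$), but this does not affect the argument.
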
 

\paragraph{Risk-Neutral Agent.}
Assume that $\Sigma=\text{Id}_2$, $R_A=0$. In this case, $M=\text{Id}_2$ and $\nu^\star= K \Gamma.$ In other words we have 
\begin{align*}
\nu^{1,\star}&= k_1(1+\gamma_1-\gamma_2)\\
\nu^{2,\star}&= k_2(1+\gamma_2-\gamma_1).
\end{align*}
The Agent works more for the Principal $1$ if and only if
\begin{equation}\label{condition12}
k_1(1+\gamma_1-\gamma_2)>  k_2(1+\gamma_2-\gamma_1).
\end{equation}
Let now $x:= \gamma_2-\gamma_1$. 
\begin{itemize}
\item First, we note some intuitive results. If the two Principals have the same ambition parameters, (\textit{i.e.} $\gamma_1=\gamma_2$), the Agent prefers working with the Principal with whom he is more efficient. Similarly,  if the two Principals have the efficiency parameters, (\textit{i.e.} $k_1=k_2$), the Agent works more for the more ambitious Principal.
\item Assume now for instance that $\gamma_1=0,\; \gamma_2=1$. Then condition \eqref{condition12} is never satisfies and the Agent does not work for Principal $1$ whatever is her performance. It means that the Agent does not work for the Principal indifferent to the competition, while the other Principal is very competitive.

\item Assume now that $\gamma_1\in [0,1]$ and $\gamma_2\in [0,1)$. Denote $x:=\gamma_2-\gamma_1\in [-1,1)$. The domain $x\leq 0$ coincides with the situation where the Principal $1$ is more ambitious that the Principal $2$ (and conversely, Principal $1$ is less ambitious than Principal $2$ when $x\geq 0$). Condition \eqref{condition12} can be rewritten
$$\frac{k_1}{k_2}> \frac{1+x}{1-x},\; x\in [-1,1). $$
Let $f(x):=  \frac{1+x}{1-x},\; x\in [-1,1)$, then $f$ is clearly increasing and convex.\vspace{0.3em}

Let us provide an interpretation of the growth of $f$. When $x$ is bigger than $0$, we notice that $f(x)$ is bigger than $1$, which shows that a lack of ambition the Principal $1$, \textit{i.e.} $\gamma_1<\gamma_2$, can be balanced with the efficiency parameter of her since it exists a domain of $x\geq 0$,  \textit{i.e.} for which the Principal 1 is less ambitious than the Principal 2 but such that the Agent works for the less ambitious Principal 1.\vspace{0.3em}

Turn now to the convexity of $f$. This phenomenon is quite interesting since it suggests that when a Principal is clearly more ambitious than the other, for instance $\gamma_1<<\gamma_2$, a little modification of the ambition parameters leads to a high variation of the quotient between the effort of the Agent for the Principal 1 and for the Principal 2. In other words, if the Principal 1 is clearly less ambitious than the Principal 2, derived lightly from this state leads to a big difference of the efforts provides by the Agent to manage the project of the Principal 1 and the Principal 2. If for instance the Principal 1 increases a little her ambition, the Agent will manage quite more her project. There is a kind of leverage effect between a deviation of an initial ambition parameter and the quantity of work provides to the Agent, when the two Principal have very different behaviours concerning their relative performances. Besides, a little modification of the ambition parameter of a Principal significantly more indifferent than the other with fixed ambition parameter, increases or decreases meaningfully the range of possible efficiency parameters to have an Agent more devoted to her project.  \end{itemize}\vspace{0.5em}
\begin{remark}
As explained in the section above with Proposition \ref{prop:bernheim}, this case can be in fact reduced to the classical second best case (with one Principal), by aggregating the Principals. Thus, we recover the classical interpretations by adding efficiency parameter $\gamma_i$ for project $i$ with $i=1,2$.
\end{remark}

\section{Conclusion}

In this paper, we show that in the common agency problem, the value functions of the Principals should be the solutions of a system of HJB equations, that we solve when the Principals are risk-neutral. Meanwhile, the coefficients of the system should satisfy the Nash equilibrium conditions. In particular, we study a model in which two Principals hire a common Agent, and we obtain one of the main results in \cite{Bernheim}, that is, the outcome with two non-cooperative employers and that with an aggregated employer coincide only when the Agent is risk-neutral.\vspace{0.3em}

\noindent At the opposite of common agency problem, other works have investigated a competition problem between two principals who want the exclusive service of an agent. In 1976, Rothschild and Stiglitz \cite{rothschild1976equilibrium} have studied insurance markets in which principals are identical and compete for a single agent. It has then been extended by Biglaiser and Mezzetti in \cite{BiglaiserMezzetti} in both moral hazard and adverse selection cases. This problem is not considered here, because we allows the agents to be remunerated by all the Principals, and it will be left for future researches. 

\section*{Acknowledgments}
We would like to thank two anonymous referees and the associated editors for their very relevant suggestions leading to a significant improvement of our paper. We are very grateful to Johannes Muhle-Karbe for very instructive discussions on common agency theory and his  suggestions and remarks in particular on the interpretations of some of our results. We also would like to thank Dylan Possama\"i and Cl\'emence Alasseur for their valuable advices.


\newcommand{\etalchar}[1]{$^{#1}$}

\end{document}